\def\today{\ifcase\month\or
  January\or February\or March\or April\or May\or June\or
  July\or August\or September\or October\or November\or December\fi
  \space\number\day, \number\year}
\DeclareMathOperator{\sgn}{\mathrm{sgn}}
\DeclareMathOperator{\supp}{\mathrm{supp}}
 \newtheorem{theorem}{Theorem}
 \newtheorem{lemma}[theorem]{Lemma}
 \newtheorem{proposition}[theorem]{Proposition}
 \newtheorem{corollary}[theorem]{Corollary}
 \theoremstyle{definition}
 \theoremstyle{remark}
 \newcommand{\mc}{\mathcal}
 \newcommand{\U}{\mc{U}}
 \newcommand{\C}{\mathbb{C}}
 \newcommand{\R}{\mathbb{R}}
 \newcommand{\N}{\mathbb{N}}
 \newcommand{\Z}{\mathbb{Z}}
 \newcommand{\hh}{\tfrac12}
 \newcommand{\ds}{\text{\rm d}s}
 \newcommand{\dt}{\text{\rm d}t}
 \newcommand{\dx}{\text{\rm d}x}
 \newcommand{\dl}{\text{\rm d}\lambda}
\newcommand{\dsigma}{\text{\rm d}\sigma}
 \newcommand{\dmu}{\text{\rm d}\mu}
    \renewcommand{\d}{\text{\rm d}}
   \newcommand{\dvar}{\text{\rm d}\vartheta}
\newcommand{\ov}{\overline}
\renewcommand{\H}{\mc{H}}
\newcommand{\im}{{\rm Im}\,}
\newcommand{\re}{{\rm Re}\,}
\newcommand{\wt}{\widetilde}
\newcommand{\ms}{\mathscr}
 \newcommand{\uc}{\partial \mathbb{D}}
\newcommand{\ud}{\mathbb{D}}
\newcommand{\dbpn}{\mathcal{H}_n(P)}
\begin{document}

\title[]{Extremal problems in de Branges spaces: \\
the case of truncated and odd functions}
\author[Carneiro and Gon\c{c}alves]{Emanuel Carneiro and Felipe Gon\c{c}alves}
\date{\today}
\subjclass[2010]{41A30, 46E22, 41A05, 42A05.}
\keywords{Extremal functions, de Branges spaces, exponential type, Laplace transform, reproducing kernel, trigonometric polynomials, majorants.}
\address{IMPA - Instituto Nacional de Matem\'{a}tica Pura e Aplicada - Estrada Dona Castorina, 110, Rio de Janeiro, RJ, Brazil 22460-320}
\email{carneiro@impa.br}
\email{lipe239@gmail.com}

\allowdisplaybreaks
\numberwithin{equation}{section}

\maketitle

\begin{abstract} In this paper we find extremal one-sided approximations of exponential type for a class of truncated and odd functions with a certain exponential subordination. These approximations optimize the $L^1(\R, |E(x)|^{-2}\dx)$-error, where $E$ is an arbitrary Hermite-Biehler entire function of bounded type in the upper half-plane. This extends the work of Holt and Vaaler \cite{HV} for the signum function. We also provide periodic analogues of these results, finding optimal one-sided approximations by trigonometric polynomials of a given degree to a class of periodic functions with exponential subordination. These extremal trigonometric polynomials optimize the $L^1(\R/\Z, \dvar)$-error, where $\vartheta$ is an arbitrary nontrivial measure on $\R/\Z$. The periodic results extend the work of Li and Vaaler \cite{LV}, who considered this problem for the sawtooth function with respect to Jacobi measures. Our techniques are based on the theory of reproducing kernel Hilbert spaces (of entire functions and of polynomials) and on the construction of suitable interpolations with nodes at the zeros of Laguerre-P\'{o}lya functions.

\end{abstract}


\section{Introduction}
\subsection{Background}
An entire function $F: \C \to \C$, not identically zero, is said to be of {\it exponential type} if
\begin{equation*}
\tau(F) = \limsup_{|z| \to \infty} |z|^{-1}\, \log |F(z)| < \infty.
\end{equation*}
In this case, the nonnegative number $\tau(F)$ is called the exponential type of $F$. We say that $F$ is {\it real entire} if $F$ restricted to $\R$ is real valued. Given a function $f:\R \to \R$, a nonnegative Borel measure $\sigma$ on $\R$, and a parameter $\delta >0$, we address here the problem of finding a pair of real entire functions $L:\C \to \C$ and $M: \C \to \C$ of exponential type at most $\delta$ such that 
\begin{equation}\label{Intro_EP1}
L(x) \leq f(x) \leq M(x)
\end{equation}
for all $x \in \R$, minimizing the integral
\begin{equation} \label{Intro_EP2}
\int_{-\infty}^{\infty} \big\{ M(x) - L(x)\big\}\, \dsigma(x).
\end{equation}

\smallskip

In the case of Lebesgue measure, this problem dates back to the work of A. Beurling in the 1930's (see \cite{V}), where the function $f(x) = \sgn(x)$ was considered. Further developments of this theory provide the solution of this extremal problem for a wide class of functions $f$ that includes, for instance, even, odd and truncated functions subject to a certain exponential or Gaussian subordination \cite{CL, CL2, CLV, CV2, CV3, DL, GV, L1, L3, L4, V}. Several applications of these extremal functions arise in analytic number theory and analysis, for instance in connection to: large sieve inequalities \cite{HV,M, V}, Erd\"{o}s-Tur\'{a}n inequalities \cite{CV2, HKW, LV,V}, Hilbert-type inequalities \cite{CL3, CLV, CV2, GV, V}, Tauberian theorems \cite{GV}, inequalities in signal processing \cite{DL}, and bounds in the theory of the Riemann zeta-function \cite{CC, CCM, CCLM, CS, Ga, GG}. Similar approximation problems are treated, for instance, in \cite{G, GL}.

\smallskip

In the case of general measures $\sigma$, the problem \eqref{Intro_EP1} - \eqref{Intro_EP2} is still vastly open. In the remarkable paper \cite{HV}, Holt and Vaaler considered the situation $f(x) = \sgn(x)$ and $\dsigma(x) = |x|^{2\nu +1}\,\dx$ with $\nu > -1$. They solved this problem (in fact, for a more general class of measures) by establishing an interesting connection with the theory of de Branges spaces of entire functions  \cite{B}. This idea was further developed in \cite{CL3} for a class of even functions $f$ with exponential subordination and in \cite{CCLM, L4} for characteristic functions of intervals, both with respect to general de Branges measures. In particular, the optimal construction in \cite{CCLM} was used to improve the existing bounds for the pair correlation of zeros of the Riemann zeta-function, under the Riemann hypothesis, extending a classical result of Gallagher \cite{Ga}.

\smallskip

The purpose of this paper is to complete the framework initiated in \cite{CL3}, where the case of even functions was treated. Here we develop an analogous extremal theory for a wide class of {\it truncated and odd functions} with exponential subordination, with respect to general de Branges measures (these are described below). In particular, this extends the work of Holt and Vaaler \cite{HV} for the signum function.

\subsection{De Branges spaces} In order to properly state our results, we need to briefly review the main concepts and terminology of the theory of Hilbert spaces of entire functions developed by L. de Branges \cite{B}. Throughout the text we denote by 
\begin{equation*}
\U = \{z \in \C; \, \im(z) >0\}
\end{equation*}
the open upper half-plane. An analytic function $F: \U \to \C$ has {\it bounded type} if it can be written as a quotient of two functions that are analytic and bounded in $\U$ (or equivalently, if $\log |F(z)|$ admits a positive harmonic majorant in $\U$). If $F: \U \to \C$ is not identically zero and has bounded type, from its Nevanlinna factorization \cite[Theorems 9 and 10]{B}, the number 
\begin{equation*}
v(F) = \limsup_{y \to \infty} \,y^{-1} \,\log |F(iy)|,
\end{equation*}
called the {\it mean type} of $F$, is finite. 

\smallskip

If $E: \C \to \C$ is entire, we define the entire function $E^*: \C \to \C$ by $E^*(z) = \ov{E(\ov{z})}$. A {\it Hermite-Biehler} function $E: \C \to \C$ is an entire function that satisfies the basic inequality
\begin{equation*}
|E^*(z)| < |E(z)|
\end{equation*}
for all $z \in \U$. If $E$ is a Hermite-Biehler function, we define the {\it de Branges space} $\H(E)$ as the space of entire functions $F:\C \to \C$ such that 
\begin{equation}\label{Intro_Def_norm_E}
\|F\|^2_{E} = \int_{-\infty}^{\infty} |F(x)|^2\, |E(x)|^{-2}\,\dx < \infty\,, 
\end{equation}
and such that $F/E$ and $F^*/E$ have bounded type in $\U$ with nonpositive mean type. This is a Hilbert space with inner product given by
\begin{equation*}
\langle F, G \rangle_E = \int_{-\infty}^{\infty} F(x)\, \ov{G(x)}\, |E(x)|^{-2}\,\dx.
\end{equation*}
The remarkable property about these spaces is that, for each $w \in \C$, the evaluation map $F \mapsto F(w)$ is a continuous linear functional. Therefore, there exists a function $K(w, \cdot) \in \H(E)$ such that 
\begin{equation*}
F(w) = \langle F,  K(w, \cdot) \rangle_E
\end{equation*}
for each $F \in \H(E)$. Such a function $K(w, z)$ is called the {\it reproducing kernel} of $\H(E)$. 

\smallskip

Associated to $E$, we define the companion functions
\begin{equation}\label{Intro_companion}
A(z) := \frac12 \big\{E(z) + E^*(z)\big\} \ \ \ {\rm and}  \ \ \ B(z) := \frac{i}{2}\big\{E(z) - E^*(z)\big\}.
\end{equation}
Note that $A$ and $B$ are real entire functions such that $E(z) = A(z) - iB(z)$. The reproducing kernel is given by \cite[Theorem 19]{B}
\begin{align}\label{Intro_K_w_z}
K(w,z)  = \frac{E(z)E^*(\ov{w}) - E^*(z)E(\ov{w})}{2\pi i (\ov{w}-z)} = \frac{B(z)A(\ov{w}) - A(z)B(\ov{w})}{\pi (z - \ov{w})},
\end{align}
and when $z = \ov{w}$ we have
\begin{equation}\label{Intro_K_z_z}
K(\ov{z}, z) = \frac{B'(z)A(z) - A'(z)B(z)}{\pi}.
\end{equation}
From the reproducing kernel property we have
\begin{align*}
K(w,w) = \langle K(w, \cdot),  K(w, \cdot) \rangle_E = \|K(w, \cdot)\|^2_E \geq 0\,,
\end{align*}
and one can easily show that $K(w,w) =0$ if and only if $w \in \R$ and $E(w) =0$ (see for instance \cite[Lemma 11]{HV} or \cite[Problem 45]{B}).

\subsection{Main results} For our purposes we let $E$ be a Hermite-Biehler function of bounded type in $\U$. In this case, a classical result of M. G. Krein (see \cite{K} or \cite[Lemma 9]{HV}) guarantees that $E$ has exponential type and $\tau(E) = v(E)$. Moreover, an entire function $F$ belongs to $\H(E)$ if and only if it has exponential type at most $\tau(E)$ and satisfies \eqref{Intro_Def_norm_E} (see \cite[Lemma 12]{HV}). 

\smallskip

Let $\mu$ be a (locally finite) signed Borel measure on $\R$ satisfying the following two properties:

\begin{enumerate}

\item[(H1)] The measure $\mu$ has support bounded by below.

\smallskip

\item[(H2)] The right-continuous distribution function associated to this measure (that we keep calling $\mu$, with a slight abuse of notation), defined by $\mu (x):= \mu((-\infty, x])$, verifies
\begin{equation}\label{Hyp1}
0 \leq \mu(x) \leq 1
\end{equation}
for all $x \in \R$.
\end{enumerate}

\noindent In some instances we require a third property:

\begin{enumerate}

\item[(H3)] The average value of the distribution function $\mu$ is $1$, i.e.
\begin{equation}\label{Hyp2}
\lim_{y \to \infty} \frac{1}{y}\int_{-\infty}^{y} \mu(x)\,\dx = 1.
\end{equation}

\end{enumerate}

\smallskip

\noindent We remark that the constant $1$ appearing on the right-hand sides of \eqref{Hyp1} and \eqref{Hyp2} could be replaced by any constant $C>0$. For simplicity, we normalize the measure (by dilating) to work with $C=1$.  Observe that any probability measure $\mu$ on $\R$ satisfying (H1) automatically satisfies (H2) and (H3). Measures like $\dmu(\lambda) =  \chi_{(0,\infty)}(\lambda)\,\sin a\lambda\, \dl$, for $a>0$, which were considered by Littmann and Spanier in \cite{LS} (giving the truncated and odd Poisson kernels in the construction below), satisfy (H1) - (H2) but not (H3).

\smallskip

Let $\mu$ be a signed Borel measure on $\R$ satisfying (H1) - (H2). We define the function $f_{\mu}$, the truncated Laplace transform of this measure, by 
\begin{equation}\label{Intro_Def_f_mu}
f_{\mu}(z) = \left\{
\begin{array}{lc}
\displaystyle\int_{-\infty}^{\infty} e^{-\lambda z}\,\dmu(\lambda), & {\rm if} \ \re(z) >0; \medskip \\
0, & {\rm if} \ \re(z) \leq 0.
\end{array}
\right.
\end{equation}
Observe that $f_{\mu}$ is a well-defined analytic function in $\re(z) >0$ since
\begin{equation}\label{Intro_int_parts}
f_{\mu}(z) = \int_{-\infty}^{\infty} e^{-\lambda z}\,\dmu(\lambda) = \int_{-\infty}^{\infty} z e^{-\lambda z} \mu(\lambda)\,\dl,
\end{equation}
where we have used integration by parts. If we write 
$$\mu^{(-1)}(y) := \int_{-\infty}^y \mu(x)\,\dx,$$ 
under the additional condition (H3) we find that (below we let $\supp(\mu) \subset (a,\infty)$)
\begin{align}\label{Intro_H3}
\begin{split}
f_{\mu}(0^+) & = \lim_{x \to 0^+} f_{\mu}(x) = \lim_{x \to 0^+} \int_{a}^{\infty} x e^{-\lambda x} \mu(\lambda)\,\dl =  \lim_{x \to 0^+} \int_{a}^{\infty} x^2 e^{-\lambda x} \mu^{(-1)}(\lambda)\,\dl\\
& = \lim_{x \to 0^+} \int_{ax}^{\infty}  x e^{-t} \mu^{(-1)}(t/x)\, \dt= \int_{0}^{\infty}  t e^{-t}\, \dt \\
& = 1,
\end{split}
\end{align}
by dominated convergence. Our first result is the following.

\begin{theorem}\label{thm1}
Let $E$ be a Hermite-Biehler function of bounded type in $\U$ such that $E(0) \neq 0$. Let $\mu$ be a signed Borel measure on $\R$ satisfying  {\rm (H1) - (H2) - (H3)}. Assume that $\supp(\mu) \subset [-2\tau(E), \infty)$ and let $f_{\mu}$ be defined by \eqref{Intro_Def_f_mu}. If $L:\C \to \C$ and $M:\C \to \C$ are real entire functions of exponential type at most $2 \tau(E)$ such that
\begin{equation}\label{Intro_eq_L_M}
L(x) \leq f_{\mu}(x) \leq M(x) 
\end{equation}
for all $x \in \R$, then 
\begin{equation}\label{Intro_answer_1}
\int_{-\infty}^{\infty} \big\{M(x) - L(x)\big\}\, |E(x)|^{-2}\,\dx \geq \frac{1}{K(0,0)}. \smallskip
\end{equation} 
Moreover, there is a unique pair of real entire functions $L_{\mu}:\C \to \C$ and $M_{\mu}:\C \to \C$ of exponential type at most $2\tau(E)$ satisfying \eqref{Intro_eq_L_M} for which the equality in \eqref{Intro_answer_1} holds.
\end{theorem}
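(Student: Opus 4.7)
My plan is to first prove the inequality \eqref{Intro_answer_1}, and then construct an extremal pair realizing it. Let $L,M$ be any admissible pair, and without loss of generality assume the integral in \eqref{Intro_answer_1} is finite. The function $F := M - L$ is real entire, of exponential type at most $2\tau(E)$, nonnegative on $\R$, and in $L^1(|E|^{-2}\dx)$. A classical factorization theorem of M.\,G.~Krein then yields an entire $U$ of exponential type at most $\tau(E)$ with $F = UU^*$. The integrability of $|U|^2 |E|^{-2}$, together with the hypothesis that $E$ is Hermite-Biehler of bounded type in $\U$ and Krein's identity $\tau(E) = v(E)$, places $U$ inside $\H(E)$ (via \cite[Lemma 12]{HV}). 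The key observation is that the jump of $f_\mu$ at the origin forces $F(0) \geq 1$: by \eqref{Intro_Def_f_mu} one has $f_\mu(0) = 0$ and so $L(0) \leq 0$, while \eqref{Intro_H3} gives $f_\mu(0^+) = 1$, whence the continuity of $M$ produces $M(0) \geq 1$. Therefore $|U(0)|^2 = F(0) \geq 1$, and the reproducing kernel property together with Cauchy-Schwarz yields
\begin{equation*}
1 \leq |U(0)|^2 = |\langle U, K(0,\cdot)\rangle_E|^2 \leq \|U\|_E^2 \, K(0,0),
\end{equation*}
which is exactly $\int F(x)|E(x)|^{-2}\dx = \|U\|_E^2 \geq 1/K(0,0)$.

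\textbf{Construction of the extremal pair.} The equality cases above dictate the ansatz $M_\mu - L_\mu = K(0,z)^2/K(0,0)^2$. Writing $\Phi(z) := B(z)A(0) - A(z)B(0)$, formula \eqref{Intro_K_w_z} gives $K(0,z) = \Phi(z)/(\pi z)$, whose real simple zeros $\{\xi_n\}$ are all nonzero since $K(0,0)\neq 0$ under the assumption $E(0)\neq 0$. These $\xi_n$ are the natural interpolation nodes at which $L_\mu$ and $M_\mu$ must tangentially touch $f_\mu$. Following the two-step strategy of \cite{HV} and \cite{CL3}, I would first solve the one-sided approximation problem for each truncated exponential
\begin{equation*}
g_\lambda(x) := e^{-\lambda x}\chi_{(0,\infty)}(x), \qquad \lambda \in [-2\tau(E),\infty),
\end{equation*}
producing real entire functions $L_\lambda, M_\lambda$ of exponential type at most $2\tau(E)$ with $L_\lambda \leq g_\lambda \leq M_\lambda$ on $\R$ and $M_\lambda - L_\lambda = c_\lambda K(0,\cdot)^2$ for a suitable $c_\lambda \geq 0$. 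These candidates are pinned down by Hermite interpolation at $\{\xi_n\}$ together with a correction capturing the jump at the origin. Assembling via the integration-by-parts form \eqref{Intro_int_parts},
\begin{equation*}
L_\mu(z) := \int_{-\infty}^{\infty} z\,e^{-\lambda z}\, \widetilde{L}_\lambda(z)\,\mu(\lambda)\,\dl, \qquad M_\mu(z) := \int_{-\infty}^{\infty} z\,e^{-\lambda z}\,\widetilde{M}_\lambda(z)\,\mu(\lambda)\,\dl
\end{equation*}
against the nonnegative weight $\mu(\lambda)\,\dl \in [0,1]\,\dl$ (with $\widetilde{L}_\lambda,\widetilde{M}_\lambda$ the analogous objects for the primitive function), I expect to obtain real entire functions of exponential type at most $2\tau(E)$ satisfying \eqref{Intro_eq_L_M} with equality in \eqref{Intro_answer_1}.

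\textbf{Main obstacle and uniqueness.} The principal technical difficulty is verifying the sign inequalities $L_\lambda(x) \leq g_\lambda(x) \leq M_\lambda(x)$ for all $x \in \R$ and each $\lambda \in [-2\tau(E),\infty)$. This requires a careful analysis of the interpolation remainder that exploits the Laguerre-P\'olya nature of $K(0,\cdot)$; the threshold $\lambda \geq -2\tau(E)$ is precisely what makes the relevant contour integrals converge and secures nonnegativity of the defects $g_\lambda - L_\lambda$ and $M_\lambda - g_\lambda$ pointwise on $\R$. Once the per-$\lambda$ inequalities are established, sign-preservation under integration against the nonnegative distribution $\mu(\lambda)$ yields $L_\mu \leq f_\mu \leq M_\mu$ on $\R$. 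Uniqueness then follows from the equality case in the lower bound argument: any extremal pair $(L',M')$ must satisfy $M'-L' = K(0,\cdot)^2/K(0,0)^2$, and the double zeros of this difference at each $\xi_n$ (and at $0$, matching the jump) force $L'(\xi_n) = f_\mu(\xi_n) = M'(\xi_n)$ and $(L')'(\xi_n) = f_\mu'(\xi_n) = (M')'(\xi_n)$, which by the Hermite interpolation theory in $\H(E^2)$ determine $L'$ and $M'$ uniquely.
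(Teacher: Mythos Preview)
Your lower bound argument is correct and coincides with the paper's: factor $M-L=UU^*$ with $U\in\H(E)$, use the jump $f_\mu(0^+)-f_\mu(0^-)=1$ to get $|U(0)|^2\geq 1$, and apply Cauchy--Schwarz against $K(0,\cdot)$. The equality condition $M-L=K(0,\cdot)^2/K(0,0)^2$ that you identify is exactly what the paper obtains.

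The construction, however, has a genuine gap. A per-$\lambda$ superposition cannot preserve one-sided inequalities because $\dmu$ is a \emph{signed} measure: even with $L_\lambda\leq g_\lambda\leq M_\lambda$ for each $\lambda$, integrating against $\dmu(\lambda)$ need not bound $f_\mu$. Your attempt to pass to the nonnegative density $\mu(\lambda)\,\dl$ produces the ill-formed expressions $\int z\,e^{-\lambda z}\,\wt{L}_\lambda(z)\,\mu(\lambda)\,\dl$; the product $z\,e^{-\lambda z}\,\wt{L}_\lambda(z)$ is not real on $\R$, and the relevant building block $x\,e^{-\lambda x}\chi_{(0,\infty)}(x)$ is \emph{continuous} at $0$, so its individual extremal problem carries no jump and will not yield the required difference $K(0,\cdot)^2/K(0,0)^2$. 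The paper proceeds quite differently: after normalizing $E(0)\in\R$ (so that $B$ has a simple zero at the origin), it applies Proposition~\ref{M-L-construction} once, with the Laguerre--P\'olya function $F=B^2$, building $L_\mu$ and $M_\mu$ directly from the frequency function $g$ of $1/B^2$ via
\[
L_\mu(z)=\mc{A}(B^2,\mu,z)+g*\dmu(0)\,\frac{B^2(z)}{z},\qquad M_\mu(z)=L_\mu(z)+\frac{B^2(z)}{B'(0)^2 z^2}.
\]
The sign conditions $L_\mu\leq f_\mu\leq M_\mu$ are then verified by writing the defects as Laplace integrals \eqref{Sec2_Prop6_eq1}--\eqref{Sec2_Prop6_eq4} and using that $g'*\mu$ is nondecreasing (since $g''\geq 0$) together with $0\leq\mu(\lambda)\leq 1$; only the distribution function enters, so the signed nature of $\dmu$ is never an obstacle. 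Uniqueness in the paper is immediate from the equality condition and the existence of one extremal pair, without recourse to Hermite interpolation.
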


Our second result is the analogous of Theorem \ref{thm1} for the odd function 
\begin{equation}\label{Intro_Def_f_mu_o}
\widetilde{f}_{\mu}(z) := f_{\mu}(z) - f_{\mu}(-z).
\end{equation}
Note that if $\mu$ is the Dirac delta measure we have $\wt{f}_{\mu}(x) = \sgn(x)$.

\begin{theorem}\label{thm2}
Let $E$ be a Hermite-Biehler function of bounded type in $\U$ such that $E(0) \neq 0$. Let $\mu$ be a signed Borel measure on $\R$ satisfying  {\rm (H1) - (H2) - (H3)}. Assume that $\supp(\mu) \subset [-2\tau(E), \infty)$ and let $\wt{f}_{\mu}$ be defined by \eqref{Intro_Def_f_mu_o}. If $L:\C \to \C$ and $M:\C \to \C$ are real entire functions of exponential type at most $2 \tau(E)$ such that
\begin{equation}\label{Intro_eq_L_M_o}
L(x) \leq \wt{f}_{\mu}(x) \leq M(x) 
\end{equation}
for all $x \in \R$, then 
\begin{equation}\label{Intro_answer_1_o}
\int_{-\infty}^{\infty} \big\{M(x) - L(x)\big\}\, |E(x)|^{-2}\,\dx \geq \frac{2}{K(0,0)}. \smallskip
\end{equation} 
Moreover, there is a unique pair of real entire functions $\wt{L}_{\mu}:\C \to \C$ and $\wt{M}_{\mu}:\C \to \C$ of exponential type at most $2\tau(E)$ satisfying \eqref{Intro_eq_L_M_o} for which the equality in \eqref{Intro_answer_1_o} holds.
\end{theorem}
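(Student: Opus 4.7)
The plan is to parallel the Holt-Vaaler argument \cite{HV} for the signum function, using a reproducing-kernel lower bound together with an explicit interpolation construction; the extra factor of $2$ in \eqref{Intro_answer_1_o} (compared to \eqref{Intro_answer_1}) comes from the fact that the jump of $\wt{f}_{\mu}$ at the origin has size $2$ instead of $1$.

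For the lower bound, take any $L \leq \wt{f}_{\mu} \leq M$ satisfying \eqref{Intro_eq_L_M_o} with the left-hand side of \eqref{Intro_answer_1_o} finite, and set $N := M-L$. Then $N$ is real entire, nonnegative on $\R$, of exponential type at most $2\tau(E)$, and integrable against $|E(x)|^{-2}\dx$. By a Krein-Akhiezer factorization (as in \cite[Lemma 13]{HV}), one writes $N = F F^*$ for some entire $F$ of exponential type at most $\tau(E)$ with $F \in \H(E)$. The reproducing kernel property then gives
\[
N(0) \,=\, |F(0)|^2 \,=\, |\langle F, K(0,\cdot)\rangle_E|^2 \,\leq\, K(0,0)\, \|F\|^2_E \,=\, K(0,0) \int_{-\infty}^{\infty} N(x)\,|E(x)|^{-2}\dx.
\]
To estimate $N(0)$ from below, hypothesis (H3) and the computation \eqref{Intro_H3} yield $f_{\mu}(0^+)=1$, so $\wt{f}_{\mu}(0^+)=1$ and $\wt{f}_{\mu}(0^-)=-1$; continuity of $M$ and $L$ at the origin combined with \eqref{Intro_eq_L_M_o} forces $M(0) \geq 1$ and $L(0) \leq -1$, giving $N(0) \geq 2$ and hence \eqref{Intro_answer_1_o}.

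For the construction of the extremal pair, I would build $\wt{L}_{\mu}$ and $\wt{M}_{\mu}$ by Hermite-type interpolation at the zeros of a suitable Laguerre-P\'olya function associated with $E$ (for instance, the zeros of $B$ when $B(0)\neq 0$), in parallel with the construction expected for Theorem \ref{thm1}. The interpolation should prescribe tangential contact with $\wt{f}_{\mu}$ (matching values and first derivatives) at each nonzero node, while the boundary conditions $\wt{M}_{\mu}(0)=1$ and $\wt{L}_{\mu}(0)=-1$ saturate the jump inequality used above. The crux is verifying $\wt{L}_{\mu} \leq \wt{f}_{\mu} \leq \wt{M}_{\mu}$ on $\R$; this should come from rewriting the interpolation error as an integral of an explicit one-signed kernel against $\dmu(\lambda)$ via the representation \eqref{Intro_Def_f_mu_o}, using the support assumption $\supp(\mu) \subset [-2\tau(E),\infty)$ to align the sign. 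The equality case of the Cauchy-Schwarz step above forces $F = c\,K(0,\cdot)$ with $|c|^2 = 2/K(0,0)$, which determines
\[
\wt{M}_{\mu}(z) - \wt{L}_{\mu}(z) \,=\, \frac{2}{K(0,0)^2}\, K(0,z)\, K^*(0,z),
\]
and a direct reproducing-kernel computation confirms that this integrates to exactly $2/K(0,0)$ against $|E(x)|^{-2}\dx$.

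Uniqueness is traced from these equality conditions: $N = \wt{M}_{\mu} - \wt{L}_{\mu}$ is uniquely determined as above, and the individual functions $\wt{M}_{\mu}$ and $\wt{L}_{\mu}$ are then pinned down by the interpolation constraints (tangential agreement with $\wt{f}_{\mu}$ at each zero of $N$, together with the boundary values at $0$). I expect the main obstacle to be the construction step, specifically verifying the one-sided inequalities for the explicitly interpolated candidates; this sign analysis, grounded in the Laplace subordination of $\wt{f}_{\mu}$ and the structure of the Laguerre-P\'olya nodes, is the technical heart of the argument.
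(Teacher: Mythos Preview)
Your optimality argument is exactly the paper's: factor $M-L=FF^*$ with $F\in\H(E)$, apply Cauchy--Schwarz against $K(0,\cdot)$, and use the jump $\wt{f}_{\mu}(0^+)-\wt{f}_{\mu}(0^-)=2$ to get $|F(0)|^2\geq 2$. The equality condition $M-L = 2\,K(0,\cdot)^2/K(0,0)^2$ you record is also what the paper obtains (note $K(0,\cdot)$ is real entire, so $K^*(0,\cdot)=K(0,\cdot)$).

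Where your sketch diverges from the paper is the construction step, and there is one small misconception and one missing idea. First, the parenthetical ``zeros of $B$ when $B(0)\neq 0$'' is backwards: the paper multiplies $E$ by a unimodular constant so that $E(0)\in\R$, which forces $B(0)=0$ (and $A(0)\neq 0$ since $E(0)\neq 0$); the Laguerre--P\'olya function used is $B^2$, which has a \emph{double} zero at the origin---this is essential for the interpolation machinery of Proposition~\ref{M-L-construction}. Second, the paper does not interpolate $\wt{f}_{\mu}$ directly. It exploits the decomposition $\wt{f}_{\mu}(x)=f_{\mu}(x)-f_{\mu}(-x)$ and sets
\[
\wt{L}_{\mu}(z)=L(B^2(z),\mu,z)-M(B^2(-z),\mu,-z),\qquad \wt{M}_{\mu}(z)=M(B^2(z),\mu,z)-L(B^2(-z),\mu,-z),
\]
where $L(\cdot),M(\cdot)$ are the truncated-function interpolants from Proposition~\ref{M-L-construction}. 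The one-sided inequalities $\wt{L}_{\mu}\leq\wt{f}_{\mu}\leq\wt{M}_{\mu}$ are then immediate from the truncated case, so the ``crux'' you anticipate (a direct sign analysis for $\wt{f}_{\mu}$) is bypassed entirely. The difference $\wt{M}_{\mu}-\wt{L}_{\mu}=2B(z)^2/(B'(0)^2 z^2)=2K(0,z)^2/K(0,0)^2$ drops out of \eqref{M-def}.

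For uniqueness the paper is terser than your outline: once the equality condition forces $M-L$ and an optimal pair is known to exist, any two optimal pairs $\{L_i,M_i\}$ can be recombined as $\{L_1,M_2\}$ and $\{L_2,M_1\}$, which are again optimal, and the fixed difference then gives $L_1=L_2$, $M_1=M_2$. Your tangency-at-nodes argument would also work but is unnecessary here.
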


\noindent{\sc Remark 1:} There is no loss of generality in assuming $E(0) \neq 0$ and $\supp(\mu) \subset [-2\tau(E), \infty)$ in Theorems \ref{thm1} and \ref{thm2}. In fact, since $f_{\mu}(x)$ and $\wt{f}_{\mu}(x)$ are discontinuous at $x =0$, if $E(0) = 0$ the integrals on the left-hand sides of \eqref{Intro_answer_1} and \eqref{Intro_answer_1_o} always diverge. Given $\varepsilon >0$, if the set $\{x \in \R;\ \mu(x) > 0\} \cap  (-\infty, -2\tau(E) - \varepsilon)$ has nonzero Lebesgue measure, we find by \eqref{Intro_int_parts} that $f_{\mu}(x) \geq C_{\varepsilon}\, x\,e^{(2\tau(E) + \varepsilon)x}$ for $x >0$, and there are no entire functions $L$ and $M$ of exponential type at most $2\tau(E)$ satisfying \eqref{Intro_eq_L_M} or \eqref{Intro_eq_L_M_o}. 

\smallskip

\noindent{\sc Remark 2:} The minorant problem for $f_{\mu}$ can be solved without the hypothesis (H3). We give the details in Corollary \ref{Cor7} below.

\smallskip

\noindent{\sc Remark 3:} Note that we are allowing the measure $\mu$ to have part of its support on the negative axis. In principle, our function $f_{\mu}(x)$ could increase exponentially as $x \to \infty$ and does not necessarily belong to $L^1(\R, |E(x)|^{-2}\,\dx)$ (the same holds for $L$ and $M$). When $f_{\mu} \in L^1(\R, |E(x)|^{-2}\,\dx)$ (resp. $\wt{f}_{\mu} \in L^1(\R, |E(x)|^{-2}\,\dx)$) it is possible to determine the corresponding optimal values of 
\begin{equation*}
\int_{-\infty}^{\infty} M(x) \,|E(x)|^{-2}\,\dx \ \ \ \ {\rm and} \ \ \ \ \int_{-\infty}^{\infty} L(x) \,|E(x)|^{-2}\,\dx
\end{equation*}
separately. This is detailed in Corollaries \ref{Prop7_sep} and \ref{Prop8_sep} below. 

\smallskip

We use two main tools in the proofs of Theorems \ref{thm1} and \ref{thm2}. The first is a basic Cauchy-Schwarz inequality in the Hilbert space $\H(E)$ that shows that the optimal choice for  $M(z)-L(z)$ must be the square of the reproducing kernel at the origin (divided by a constant). The second tool, used to show the existence of such optimal majorants and minorants, is the construction of suitable entire functions that interpolate $f_{\mu}$ at the zeros of a given Laguerre-P\'{o}lya function. The latter is detailed in Section \ref{Interpolation_sec} and extends the construction of Holt and Vaaler \cite[Section 2]{HV}, that was tailored specifically for the signum function.

\subsection{A class of homogeneous spaces} There is a variety of examples of de Branges spaces \cite[Chapter 3]{B} for which Theorems \ref{thm1} and \ref{thm2} can be directly applied. A basic example would be the classical Paley-Wiener space $\H(E)$, when $E(z) = e^{-i\tau z}$ with $\tau >0$,  which gives us extremal functions of exponential type at most $2\tau$ with respect to Lebesgue measure (this extends the classical work of Graham and Vaaler \cite{GV}). Another interesting family arises in the discussion of \cite[Section 5]{HV}. In the terminology of de Branges \cite[Section 50]{B}, these are examples of homogeneous spaces, and we briefly review their construction below.

\smallskip

Let $\nu > -1$ be a parameter and consider the real entire functions $A_\nu$ and $B_\nu$ given by
\begin{equation*}
A_{\nu}(z) = \sum_{n=0}^{\infty} \frac{(-1)^n \big(\tfrac12 z\big)^{2n}}{n!(\nu +1)(\nu +2)\ldots(\nu+n)} = \Gamma(\nu +1) \left(\tfrac12 z\right)^{-\nu} J_{\nu}(z)
\end{equation*}
and
\begin{equation*}
B_{\nu}(z) = \sum_{n=0}^{\infty} \frac{(-1)^n \big(\tfrac12 z\big)^{2n+1}}{n!(\nu +1)(\nu +2)\ldots(\nu+n+1)} = \Gamma(\nu +1) \left(\tfrac12 z\right)^{-\nu} J_{\nu+1}(z),
\end{equation*}
where $J_\nu$ denotes the classical Bessel function of the first kind. If we write
\begin{equation*}
E_\nu(z) = A_\nu(z) - iB_\nu(z),
\end{equation*}
then the function $E_\nu$ is a Hermite-Biehler function of bounded type in $\U$ with exponential type $\tau(E_\nu) = 1$ and no real zeros. Observe that when $\nu = -1/2$ we have simply $A_{-1/2}(z) = \cos z$ and $B_{-1/2}(z) = \sin z$. For a general $\nu > -1$, there are positive constants $a_\nu$ and $b_\nu$ such that 
\begin{equation}\label{Intro_homog_eq1}
a_\nu |x|^{2\nu+1} \le |E_{\nu}(x)|^{-2} \le b_\nu |x|^{2\nu+1}
\end{equation}
for all $x \in \R$ with $|x|\geq1$. For each $F\in\H(E_\nu)$ we have the remarkable identity 
\begin{align}\label{Intro_homog_eq2}
\int_{-\infty}^\infty |F(x)|^{2}\,|E_{\nu}(x)|^{-2}\, \dx = c_\nu \int_{-\infty}^\infty |F(x)|^2 \,|x|^{2\nu+1} \,\dx\,,
\end{align}
with $c_\nu = \pi \,2^{-2\nu-1}\, \Gamma(\nu+1)^{-2}$, and from \eqref{Intro_homog_eq1} and \eqref{Intro_homog_eq2} we see that $F \in \H(E_\nu)$ if and only if $F$ has exponential type at most $1$ and either side of \eqref{Intro_homog_eq2} is finite. Identity \eqref{Intro_homog_eq2} makes $\H(E_{\nu})$ the suitable de Branges space to treat the extremal problem \eqref{Intro_EP1} - \eqref{Intro_EP2} for the power measure $\dsigma(x) = |x|^{2\nu +1}\,\dx$. In order to do so, we define 
\begin{equation*}
\Delta_\nu(\delta, \mu) = \inf \int_{-\infty}^{\infty} \big\{ M(x) - L(x)\big\} \,|x|^{2\nu+1}\,\dx\,,
\end{equation*}
where the infimum is taken over all pairs of real entire functions $L:\C \to \C$ and $M:\C \to \C$ of exponential type at most $\delta$ such that $L(x) \leq f_{\mu}(x) \leq M(x)$ for all $x \in \R$. If there is no such a pair we set $\Delta_\nu(\delta, \mu) = \infty$. Define $\wt{\Delta}_\nu(\delta, \mu)$ considering the analogous extremal problem for the odd function $\wt{f}_{\mu}$. The following result follows from Theorems \ref{thm1} and \ref{thm2}.
\begin{theorem}\label{thm3}
Let $\nu > -1$ and $\delta >0$. Let $\mu$ be a signed Borel measure on $\R$ satisfying  {\rm (H1) - (H2) - (H3)}, and let $f_{\mu}$ be defined by \eqref{Intro_Def_f_mu} {\rm (}resp. $\wt{f}_{\mu}$ be defined by \eqref{Intro_Def_f_mu_o}\rm{)}. We have
\begin{equation}\label{Intro_thm3_1}
\Delta_\nu(\delta, \mu) = \left\{
\begin{array}{ll}
 \Gamma(\nu+1)\,\Gamma(\nu+2) \left(\frac{4}{\delta}\right)^{2\nu +2},&  \ \ {\rm if}\, \supp(\mu) \subset [-\delta, \infty);\smallskip\\
\infty,& \ \ {\rm otherwise};
\end{array}
\right.\\
\end{equation}
and
\begin{equation}\label{Intro_thm3_2}
\wt{\Delta}_\nu(\delta, \mu) = \left\{
\begin{array}{ll}
 2\,\Gamma(\nu+1)\,\Gamma(\nu+2) \left(\frac{4}{\delta}\right)^{2\nu +2},&  \ \ {\rm if}\, \supp(\mu) \subset [-\delta, \infty);\smallskip\\
\infty,& \ \ {\rm otherwise}.
\end{array}
\right.\\
\end{equation}
If $\Delta_\nu(\delta, \mu)$ {\rm (}resp. $\wt{\Delta}_\nu(\delta, \mu)${\rm)} is finite, there exists a unique pair of corresponding extremal functions. 
\end{theorem}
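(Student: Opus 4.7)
The plan is to reduce Theorem~\ref{thm3} to Theorems~\ref{thm1} and \ref{thm2} applied to the rescaled Hermite-Biehler function $E(z) := E_\nu(\delta z/2)$. First I would verify the hypotheses: the dilation preserves the Hermite-Biehler property and bounded type in $\U$; $\tau(E) = \delta/2$, so the constraint of exponential type at most $2\tau(E) = \delta$ matches the problem; $E(0) = A_\nu(0) - iB_\nu(0) = 1 \neq 0$; and the support condition $\supp(\mu) \subset [-\delta,\infty)$ is identical to $\supp(\mu) \subset [-2\tau(E), \infty)$. The companion functions become $A(z) = A_\nu(\delta z/2)$ and $B(z) = B_\nu(\delta z/2)$, and the reproducing kernel at the origin is computed from \eqref{Intro_K_z_z} using that $A_\nu$ is even, $A_\nu(0) = 1$, $B_\nu(0) = 0$, and $B_\nu'(0) = 1/(2(\nu+1))$:
\begin{equation*}
K(0,0) = \frac{B'(0) A(0) - A'(0) B(0)}{\pi} = \frac{(\delta/2)\, B_\nu'(0)}{\pi} = \frac{\delta}{4\pi(\nu+1)}.
\end{equation*}

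The second step is to translate the $|E(x)|^{-2}$ weight produced by Theorem~\ref{thm1} into the target power weight $|x|^{2\nu+1}$. A change of variables in \eqref{Intro_homog_eq2} gives
\begin{equation*}
\int_{-\infty}^{\infty} |F(x)|^2\, |E(x)|^{-2}\,\dx = c_\nu\, (\delta/2)^{2\nu+1} \int_{-\infty}^{\infty} |F(x)|^2 \,|x|^{2\nu+1}\,\dx
\end{equation*}
for every $F \in \H(E)$. Now, for any admissible pair $(L,M)$ with $\int(M-L)|x|^{2\nu+1}\,\dx < \infty$, the difference $H = M-L$ is a nonnegative real entire function of exponential type at most $\delta$. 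By the classical Krein--Akhiezer factorization theorem for nonnegative entire functions of finite exponential type, we can write $H = G\,G^*$ with $G$ entire of exponential type at most $\delta/2 = \tau(E)$. The comparability estimate \eqref{Intro_homog_eq1} (rescaled) together with the boundedness of $|E|^{-2}$ near the origin (since $E(0)\neq 0$) yields $\int |G|^2\, |E|^{-2}\,\dx < \infty$, so the Krein characterization of $\H(E)$ quoted at the start of Section~1.3 places $G \in \H(E)$. Applying the homogeneous identity to $|G|^2 = H$ transfers the two weighted integrals:
\begin{equation*}
\int_{-\infty}^\infty (M-L)(x)\, |E(x)|^{-2}\,\dx = c_\nu\, (\delta/2)^{2\nu+1} \int_{-\infty}^\infty (M-L)(x)\, |x|^{2\nu+1}\,\dx.
\end{equation*}

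Combining the identity with the sharp bound $1/K(0,0)$ from Theorem~\ref{thm1} for the $|E|^{-2}$ problem (respectively $2/K(0,0)$ from Theorem~\ref{thm2}) and plugging in $c_\nu = \pi/(2^{2\nu+1}\Gamma(\nu+1)^2)$ along with the value of $K(0,0)$ above, routine arithmetic using $(\nu+1)\Gamma(\nu+1) = \Gamma(\nu+2)$ produces exactly $\Gamma(\nu+1)\Gamma(\nu+2)(4/\delta)^{2\nu+2}$ (respectively twice that) as a lower bound for the $|x|^{2\nu+1}$-integral. The same identity applied to the extremal pair $(L_\mu,M_\mu)$ (resp.\ $(\wt L_\mu, \wt M_\mu)$) supplied by Theorems~\ref{thm1}--\ref{thm2} shows the bound is attained and yields uniqueness. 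For the degenerate case $\supp(\mu) \not\subset [-\delta,\infty)$, the growth argument of Remark~1 shows that $f_\mu$ and $\wt f_\mu$ cannot be sandwiched between entire functions of exponential type $\delta$, whence $\Delta_\nu(\delta,\mu) = \wt\Delta_\nu(\delta,\mu) = \infty$ by convention. The principal obstacle I anticipate is the factorization step: one must ensure that the factor $G$ in $M-L = G\,G^*$ actually belongs to the de Branges space $\H(E)$ (not merely that it is entire of the correct exponential type), so that the homogeneous identity can legitimately bridge the two weighted integrals.
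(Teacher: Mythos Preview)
Your proposal is correct and follows essentially the same approach as the paper. The only cosmetic difference is that the paper first uses a dilation $\Delta_\nu(\delta,\mu)=\kappa^{2\nu+2}\Delta_\nu(\kappa\delta,\mu_{\kappa^{-1}})$ to reduce to $\delta=2$ and then works with $E_\nu$ itself, whereas you keep $\delta$ general and dilate the Hermite--Biehler function to $E(z)=E_\nu(\delta z/2)$; the computations are the same either way. Regarding your anticipated obstacle, the paper dispatches it in one stroke by invoking \cite[Theorem~15]{HV} (equivalently \cite[Lemma~14]{CL3}), which takes as input a nonnegative entire function of exponential type at most $2\tau(E)$ lying in $L^1(\R,|E|^{-2}\,\dx)$ and outputs directly a factor $U\in\H(E)$ with $M-L=UU^*$; this replaces your two-step ``factorize, then verify $\H(E)$-membership'' route.
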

\begin{proof} To see why Theorem \ref{thm3} is indeed a consequence of Theorems \ref{thm1} and \ref{thm2} we proceed as follows. For $\kappa >0$, we consider the measure $\mu_{\kappa}$ defined by $\mu_{\kappa}(\Omega) = \mu(k\Omega)$, where $\Omega$ is any Borel measurable set and $\kappa \Omega = \{\kappa\lambda; \ \lambda \in \Omega\}$. A simple dilation argument shows that
\begin{equation*}
\Delta_\nu(\delta, \mu) = \kappa^{2\nu +2}\,\Delta_\nu\big(\kappa\delta, \mu_{\kappa^{-1}}\big) \ \ \ {\rm and} \ \ \ \wt{\Delta}_\nu(\delta, \mu) = \kappa^{2\nu +2}\,\wt{\Delta}_\nu\big(\kappa\delta, \mu_{\kappa^{-1}}\big),
\end{equation*}
and we can reduce matters to the case $\delta =2$. Now let $L$ and $M$ be a pair of real entire functions of exponential type at most $2$ such that $L(x) \leq f_{\mu}(x) \leq M(x)$ for all $x \in \R$, and such that $(M-L) \in L^1(\R, |x|^{2\nu+1}\,\dx)$. By \eqref{Intro_homog_eq1} we have that $(M-L) \in L^1(\R, |E_{\nu}(x)|^{-2}\,\dx)$. Since $(M-L)$ is nonnegative on $\R$, according to \cite[Theorem 15]{HV} (see also \cite[Lemma 14]{CL3}) we can write $M(z) - L(z) = U(z)U^*(z)$ with $U \in \H(E_\nu)$. Therefore, by identity \eqref{Intro_homog_eq2} and Theorem \ref{thm1}, we have
\begin{align*}
\int_{-\infty}^{\infty}  \big\{ M(x) & - L(x)\big\} \, |x|^{2\nu+1}\,\dx   = \int_{-\infty}^{\infty} |U(x)|^2\, |x|^{2\nu+1}\,\dx = c_\nu^{-1}  \int_{-\infty}^{\infty} |U(x)|^2\, |E_\nu(x)|^{-2}\,\dx\\
& = c_\nu^{-1}  \int_{-\infty}^{\infty} \big\{ M(x) - L(x)\big\} \, |E_\nu(x)|^{-2}\,\dx \geq c_\nu^{-1}\, K_\nu(0,0)^{-1},
\end{align*}
where $c_\nu = \pi \,2^{-2\nu-1}\, \Gamma(\nu+1)^{-2}$ and 
\begin{equation*}
K_\nu(0,0) = \frac{B_\nu'(0) A_\nu(0)}{\pi} = \frac{1}{2 \pi (\nu+1)}.
\end{equation*}
This establishes \eqref{Intro_thm3_1}. A similar argument using Theorem \ref{thm2} gives \eqref{Intro_thm3_2}.

\end{proof}

As illustrated in the argument above, in order to use the general machinery of Theorems \ref{thm1} and \ref{thm2} to solve the extremal problem \eqref{Intro_EP1} - \eqref{Intro_EP2} for a given measure $\sigma$, one has to first construct an appropriate de Branges space $\H(E)$ that is isometrically contained in $L^2(\R, \dsigma)$. In particular, this construction was carried out in \cite{CCLM} for the measure
\begin{equation*}
\dsigma(x) = \left\{1 - \left(\frac{\sin \pi x}{\pi x}\right)^2 \right\}\dx,
\end{equation*}
that appears in connection to Montgomery's formula and the pair correlation of zeros of the Riemann zeta-function (see \cite{M2}), and in \cite{LS} for the measure 
\begin{equation*}
\dsigma(x) = (x^2 + a^2)\,\dx,
\end{equation*}
where $a \geq 0$, that appears in connection to extremal problems with prescribed vanishing conditions.

\subsection{Periodic analogues} \label{par_Per}

In Section \ref{Per_Analogues} we consider the periodic version of this extremal problem. Throughout the paper we write $e(z) = e^{2\pi i z}$ for $z \in \C$. A trigonometric polynomial of degree at most $N$ is an entire function of the form
$$\ms{W}(z) = \sum_{k=-N}^{N} a_k \,e(kz),$$
where $a_k \in \C$. We say that $\ms{W}$ is a {\it real trigonometric polynomial} if $\ms{W}(z)$ is real for $z$ real.
Given a periodic function $\ms{F}:\R/\Z \to \R$, a probability measure $\vartheta$ on  $\R/\Z$ and a degree $N \in \Z^+$, we address in Section \ref{Per_Analogues} the problem of finding a pair of real trigonometric polynomials $\ms{L}:\C \to \C$ and $\ms{M}: \C \to \C$ of degree at most $N$ such that  
\begin{equation}\label{Trig_EP1}
\ms{L}(x) \leq \ms{F}(x) \leq \ms{M}(x)
\end{equation}
for all $x \in \R/\Z$, minimizing the integral
\begin{equation} \label{Trig_EP2}
\int_{\R/\Z} \big\{ \ms{M}(x) - \ms{L}(x)\big\}\, \dvar(x).
\end{equation}
When $\vartheta$ is the Lebesgue measure, this problem was considered, for instance, in \cite{Car, CV2, LV, V} in connection to discrepancy inequalities of Erd\"{o}s-Tur\'{a}n type. For general even measures $\vartheta$, the case of even periodic functions with exponential subordination was considered in \cite{BV, CL3}. In \cite{LV}, Li and Vaaler solved this extremal problem for the sawtooth function 
$$\psi(x) = 
\left\{
\begin{array}{lc}
x - \lfloor x \rfloor - \hh&, \ {\rm if} \ x \notin \Z;\\
0&, \ {\rm if} \ x \in \Z;
\end{array}
\right.
$$ 
with respect to the Jacobi measures. The purpose of Section \ref{Per_Analogues} is to extend the work \cite{LV}, solving this problem for a general class of functions with exponential subordination (which are the periodizations of our functions $f_{\mu}$ and $\wt{f}_{\mu}$, including the sawtooth function as a particular case) with respect to arbitrary nontrivial probability measures $\vartheta$ (we say that $\vartheta$ is trivial if it has support on a finite number of points). The solution of this periodic extremal problem is connected to the theory of reproducing kernel Hilbert spaces of polynomials and the theory of orthogonal polynomials in the unit circle.


\section{Interpolation tools}\label{Interpolation_sec}

\subsection{Laplace transforms and Laguerre-P\'{o}lya functions} In this subsection we review some basic facts concerning Laguerre-P\'{o}lya functions and the representation of their inverses as Laplace transforms as in \cite[Chapters II to V]{HW}. The selected material we need is already well organized in \cite[Section 2]{CL3} and we follow closely their notation.

\smallskip

We say that an entire function $F: \C \to \C$ belongs to the {\it Laguerre-P\'{o}lya class} if it has only real zeros and its Hadamard factorization is given by
\begin{equation}\label{hadamardproduct}
F(z)=\frac{F^{(r)}(0)}{r!}\,z^r\, e^{-az^2+bz}\,\prod_{j=1}^\infty\Big(1-\frac{z}{x_j}\Big)e^{z/x_j},
\end{equation}
where $r\in\Z^{+}$, $a, b, x_j \in \R$, with $a \geq 0$, $x_j \neq 0$ and $\sum_{j=1}^\infty x_j^{-2}<\infty$ (with the appropriate change of notation in case of a finite number of zeros). Such functions are the uniform limits (in compact sets) of polynomials with only real zeros. We say that a Laguerre-P\'{o}lya function $F$ represented by (\ref{hadamardproduct}) has finite degree $\mc{N} = \mc{N}(F)$ when $a=0$ and $F$ has exactly $\mc{N}$ zeros counted with multiplicity. Otherwise we set $\mc{N}(F) = \infty$. 

\smallskip

If $F$ is a Laguerre-P\'{o}lya function with $\mc{N}(F) \geq 2$, and $c\in \R$ is such that $F(c) \neq 0$, we henceforth denote by $g_c$ the frequency function given by 
\begin{equation}\label{gc-def}
g_c(t) = \frac{1}{2\pi i } \int_{c-i\infty}^{c+i\infty} \frac{e^{ts}}{F(s)}\, \ds.
\end{equation}
Observe that the integral in \eqref{gc-def} is absolutely convergent since the condition $\mc{N}(F) \geq 2$ implies that $1/|F(c + iy)| = O(|y|^{-2})$ as $|y| \to \infty$. If $(\tau_1,\tau_2) \subset \R$ is the largest open interval containing no zeros of $F$ such that $c \in (\tau_1,\tau_2)$, the residue theorem implies that $g_c = g_d$ for any $d \in (\tau_1, \tau_2)$. Moreover, the Laplace transform representation 
\begin{equation}\label{gc-trafo}
\frac{1}{F(z)} = \int_{-\infty}^\infty g_c(t)\, e^{-tz}\, \,\dt
\end{equation}
holds in the strip $\tau_1<\re(z)<\tau_2$ (the integral in \eqref{gc-trafo} is in fact absolutely convergent due to Lemma \ref{qualitative_g_c} below). If $\mc{N}(F) = 0$ or $1$, we can still represent $F(z)^{-1}$ as a Laplace transform on vertical strips. In fact, if $\mc{N}(F) =1$, we let $\tau$ be the zero of $F$, written in the form \eqref{hadamardproduct}. If $\tau = 0$ then \eqref{gc-trafo} holds with
\begin{equation}\label{Def_g_c_N_1_0}
g_c(t) = \left\{
\begin{array}{ll}
\vspace{0.2cm}
F'(0)^{-1}\, \chi_{(b,\infty)}(t), & \ {\rm for}\ c>0;\\
-F'(0)^{-1}\, \chi_{(-\infty,b)}(t),& \ {\rm for}\ c<0.\\
\end{array}
\right.
\end{equation}
If $\tau \neq 0$ then \eqref{gc-trafo} holds with
\begin{equation}\label{Def_g_c_N_1_1}
g_c(t) = \left\{
\begin{array}{ll}
\vspace{0.2cm}
-\tau \, F(0)^{-1}\, e^{\tau(t - b) -1}\chi_{(b + \tau^{-1},\infty)}(t), & \ {\rm for}\ c>\tau;\\
\tau \, F(0)^{-1}\, e^{\tau(t - b) -1}\chi_{(-\infty,b + \tau^{-1})}(t),& \ {\rm for}\ c<\tau.\\
\end{array}
\right.
\end{equation}
If $\mc{N}(F) = 0$ then \eqref{gc-trafo} holds with 
\begin{equation*}
g_c(t) = F(0)^{-1}\,\delta(t-b),
\end{equation*}
for any $c \in \R$, where $\delta$ denotes the Dirac delta measure.

\smallskip

The fundamental tool for the development of our interpolation theory in this section is the precise qualitative knowledge of the frequency functions $g_c$. This is extensively discussed in \cite[Chapters II to V]{HW} and we collect the relevant facts for our purposes in the next lemma.

\begin{lemma}\label{qualitative_g_c} 
Let $F$ be a Laguerre-P\'olya function of degree $\mc{N}\ge 2$ and let $g_c$ be defined by \eqref{gc-def}, where $c \in \R$ and $F(c) \neq 0$. The following propositions hold:
\begin{enumerate} 
\item [(i)] The function $g_c \in C^{\mc{N}-2}(\R)$ and is real valued.
 
 \smallskip
 
\item[(ii)] The function $g_c$ is of one sign, and its sign equals the sign of $F(c)$.

\smallskip

\item[(iii)] If $(\tau_1,\tau_2) \subset \R$ is the largest open interval containing no zeros of $F$ such that $c \in (\tau_1,\tau_2)$, then for any $\tau \in (\tau_1,\tau_2)$ we have the following estimate
\begin{align}\label{Lem4_growth}
\big|g_c^{(n)}(t)\big| \ll_{\tau,n} e^{\tau t} \ \ \  \forall \,t \in \R,
\end{align}
where $0 \leq n \leq \mc{N}-2$.
\end{enumerate}
\end{lemma}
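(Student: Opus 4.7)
The plan is to handle parts (i) and (iii) by direct analytic estimates on the integrand of \eqref{gc-def}, and part (ii) by decomposing $F$ into its Laguerre-P\'olya factors and applying the bilateral Laplace convolution theorem to reduce the sign question to a residue computation on individual factors.

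For (i), since $\mc{N}(F) < \infty$ forces $a=0$ in \eqref{hadamardproduct}, $F$ is, up to the exponential $e^{bz}$ and an overall constant, a polynomial of degree $\mc{N}$ in $z$ with real zeros. This yields a lower bound $|F(c+iy)| \gtrsim_c (1+|y|)^{\mc{N}}$ for all $y \in \R$ (the constant being positive since $F(c) \neq 0$), so for $0 \leq n \leq \mc{N}-2$ the integral $\int (1+|y|)^n |F(c+iy)|^{-1}\,\dy$ is finite. This justifies differentiating \eqref{gc-def} under the integral sign $n$ times and yields $g_c \in C^{\mc{N}-2}(\R)$; real-valuedness follows from the symmetry $\overline{F(c+iy)} = F(c-iy)$ (valid because $F$ has real Hadamard data) after substituting $y \mapsto -y$. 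For (iii), both $\tau$ and $c$ lie in the zero-free interval $(\tau_1,\tau_2)$, so by Cauchy's theorem and the uniform decay of $1/F$ on the strip bounded by $\re(s)=c$ and $\re(s)=\tau$ (again since $|F(x+iy)|$ grows like $|y|^{\mc{N}}$ uniformly in $x$ on this compact set), the function $g_c$ may be computed from the integral along $\re(s)=\tau$. Differentiating $n$ times and passing absolute values inside gives
\begin{equation*}
|g_c^{(n)}(t)| \leq \frac{e^{\tau t}}{2\pi}\int_{-\infty}^{\infty}\frac{|\tau+iy|^n}{|F(\tau+iy)|}\,\dy,
\end{equation*}
which is the desired bound \eqref{Lem4_growth} since the remaining integral is finite and independent of $t$.

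For (ii), the key is the bilateral convolution identity $\mathcal{L}^{-1}(1/(F_1 F_2)) = \mathcal{L}^{-1}(1/F_1) \ast \mathcal{L}^{-1}(1/F_2)$ on the intersection of strips of convergence. Factoring $F$ into its elementary blocks --- the monomial $z^r$ (handled by the $\mc{N}=1$ case \eqref{Def_g_c_N_1_0}), the harmless exponential $e^{bz}$ (whose inverse Laplace transform is a translated delta), and each linear factor $(1 - z/x_j)$ (handled by the $\mc{N}=1$ case \eqref{Def_g_c_N_1_1}) --- reduces the question to the signs of elementary inverse transforms and their behavior under convolution. Each elementary inverse transform is a one-sided exponential that is strictly of one sign on its half-line of support, and that sign, read off from the residue computation behind \eqref{Def_g_c_N_1_0}--\eqref{Def_g_c_N_1_1}, equals $\sgn F_j(c)$ for the corresponding factor $F_j$. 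A convolution of finitely many one-sided exponentials with supports pointing in compatible directions is itself one-sided and strictly of the product sign on the interior of its support (the convolution integrand is of constant sign and nonvanishing on an interval of positive length), so $\sgn g_c = \prod_j \sgn F_j(c) = \sgn F(c)$. The main obstacle is this bookkeeping: one must verify that the half-line supports of the elementary inverse transforms point in consistent directions (so that their iterated convolution has nonempty support and does not accidentally produce sign cancellations), and that switching the abscissa $c$ across a single zero $x_j$ simultaneously flips the sign of the corresponding building block and of $F(c)$, preserving the identity. Handling this rigorously suggests a finite induction on the number of factors, peeling off one zero at a time.
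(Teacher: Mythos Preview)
Your direct estimates for (i) and (iii) are correct and clean when $\mc{N}(F)<\infty$, but the lemma as stated includes the case $\mc{N}(F)=\infty$ (either $a>0$ in \eqref{hadamardproduct} or infinitely many zeros), and this is precisely the case the paper needs: the Laguerre--P\'olya function $B^2$ to which Proposition~\ref{M-L-construction} is later applied typically has infinitely many zeros. When $\mc{N}=\infty$, $F$ is no longer a polynomial times $e^{bz}$, so the bound $|F(c+iy)|\gtrsim(1+|y|)^{\mc{N}}$ on which you base (i) and (iii) is vacuous, and your factorization in (ii) into \emph{finitely many} elementary blocks likewise breaks down. Extending your approach to infinite degree would require either a limiting argument (truncate the Hadamard product and control the convergence of the corresponding frequency functions and their derivatives) or the structural results of Hirschman--Widder, which is exactly what the paper invokes.

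For comparison, the paper gives no self-contained argument at all: it simply cites \cite[Chapter~IV, Theorems~5.1 and~5.3]{HW} for (i)--(ii) and \cite[Chapter~II, Theorem~8.2 and Chapter~V, Theorem~2.1]{HW} for (iii). Your convolution-of-elementary-factors idea for (ii) is in fact the backbone of the Hirschman--Widder theory, so in the finite-degree case you are essentially rederiving their proof from scratch; the only genuine gap is the passage to infinite degree.
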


\begin{proof}
Parts (i) and (ii) follow from \cite[Chapter IV, Theorems 5.1 and 5.3]{HW}. Part (iii) follows from  \cite[Chapter II, Theorem 8.2 and Chapter V, Theorem 2.1]{HW}.
\end{proof}

\subsection{Interpolating $f_{\mu}$ at the zeros of Laguerre-P\'{o}lya functions} In this subsection we construct suitable entire functions that interpolate our $f_{\mu}$ at the zeros of a given Laguerre-P\'{o}lya function. In order to accomplish this, we make use of the representation in \eqref{gc-trafo} and Lemma \ref{qualitative_g_c}. The material in this subsection extends the classical work of Graham and Vaaler in \cite[Section 3]{GV}, where this construction was achieved for the particular function $F(x) = (\sin \pi x)^2$ of Laguerre-P\'{o}lya class.

\smallskip

If $F$ is a Laguerre-P\'{o}lya function, we henceforth denote by $\alpha_F$ the smallest positive zero of $F$ (if no such zero exists, we set $\alpha_F = \infty$). Let $g = g_{\alpha_F/2}$ (if $\alpha_F = \infty$ take $g = g_1$). If $\mu$ is a signed Borel measure on $\R$ satisfying (H1) - (H2), it is clear that the function 
\begin{equation}\label{Sec2_equ_conv}
g*\dmu(t) = \int_{-\infty}^{\infty} g(t - \lambda)\,\dmu(\lambda) = \int_{-\infty}^{\infty} g'(t - \lambda)\,\mu(\lambda)\,\dl = g'*\mu(t)
\end{equation}
satisfies the same growth conditions as in \eqref{Lem4_growth} for $\tau \in (0,\alpha_F)$, for $0 \leq n \leq \mc{N} - 3$, with the implied constants now depending also on $\mu$. We are now in position to define the building blocks of our interpolation.

\begin{proposition}\label{Prop5_int}
Let $F$ be a Laguerre-P\'{o}lya function with $\mc{N}(F) \geq 2$. Let $g=g_{\alpha_F/2}$ and assume that $F(\alpha_F/2)>0$ $($in case $\alpha_F= +\infty$, let $g = g_1$ and assume $F(1) >0 {\rm )}$. Let $\mu$ be a signed Borel measure on $\R$ satisfying {\rm (H1) - (H2)}, and let $f_{\mu}$ be defined by \eqref{Intro_Def_f_mu}. Define
\begin{align}
\mc{A}_1(F,\mu,z)  &= F(z) \int_{-\infty}^0 g*\dmu(t)\, e^{-tz}\, \dt\ \ \text{\rm for }\,\re(z)<\alpha_F,\label{Def_A_1}\\
\mc{A}_2(F,\mu,z) &=  f_{\mu}(z) - F(z) \int_{0}^\infty g*\dmu(t)\, e^{-tz} \,\dt\ \ \text{ \rm for }\, \re(z)>0.\label{Def_A_2}
\end{align}
Then $z\mapsto \mc{A}_1(F,\mu,z)$ is analytic in $\re(z)<\alpha_F$, $z\mapsto \mc{A}_2(F,\mu,z)$ is analytic in $\re(z)>0$, and these functions are restrictions of an entire function, which we will denote by $\mc{A}(F,\mu,z)$. Moreover, if  $supp(\mu)\subset[-\tau,\infty)$, there exists $c>0$ so that 
\begin{align}\label{al-growth}
|\mc{A}(F,\mu,z)| \le c\,\big( |z|\,e^{\tau x}\,\chi_{(0,\infty)}(x) + |F(z)|\big)
\end{align}
for all $z=x+iy\in\C$, and 
\begin{align}\label{zeros-a}
\mc{A}(F,\mu,\xi)=f_{\mu}(\xi)
\end{align} 
for all $\xi \in \R$ with $F(\xi)=0$.
\end{proposition}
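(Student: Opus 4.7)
I would establish the four assertions in order, with the exponential estimates on $g*\dmu$ inherited from Lemma \ref{qualitative_g_c} as the common engine. Let $(\tau_1,\alpha_F)$ denote the maximal zero-free open interval of $F$ containing $\alpha_F/2$; since $\alpha_F$ is the smallest positive zero of $F$, we have $\tau_1\le 0$. Combining Lemma \ref{qualitative_g_c}(iii) with \eqref{Sec2_equ_conv} and the bound $|\mu|\le 1$ from (H2) gives
\begin{equation*}
|g*\dmu(t)|\,\le\, C_{\tau'}\,e^{\tau' t}\qquad(t\in\R,\ \tau'\in(\tau_1,\alpha_F)).
\end{equation*}
From this the analyticity of $\mc{A}_1$ on $\{\re(z)<\alpha_F\}$ follows by Morera's theorem: on a compact subset $K$ pick $\tau'$ slightly above $\sup_K\re(z)$, so the integrand is dominated by $|F(z)|\,e^{(\tau'-\re(z))t}$, integrable on $(-\infty,0]$. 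The analyticity of $\mc{A}_2$ on $\{\re(z)>0\}$ is similar, combining the analyticity of $f_\mu$ there with the locally uniform absolute convergence of $\int_0^\infty g*\dmu(t)\,e^{-tz}\,\dt$ for $\re(z)>0$.

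Next, on the overlap strip $\{0<\re(z)<\alpha_F\}$ I would rewrite
\begin{equation*}
\mc{A}_1(F,\mu,z)-\mc{A}_2(F,\mu,z)+f_\mu(z)\,=\,F(z)\int_{-\infty}^{\infty}g*\dmu(t)\,e^{-tz}\,\dt.
\end{equation*}
Using the form $g*\dmu=g'*\mu$ from \eqref{Sec2_equ_conv} (which sidesteps any control of the total variation of $\mu$), together with $|\mu|\le 1$, $\supp(\mu)\subset[-\tau,\infty)$, and the absolute convergence of $\int|g'(s)|e^{-s\re(z)}\,\ds$, Fubini's theorem justifies the interchange of integrals, and the product splits as $F(z)\cdot(f_\mu(z)/z)\cdot(z/F(z))=f_\mu(z)$ by \eqref{Intro_int_parts} and \eqref{gc-trafo}. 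Hence $\mc{A}_1=\mc{A}_2$ on the strip, and since $\{\re(z)<\alpha_F\}\cup\{\re(z)>0\}=\C$, the common value defines an entire function $\mc{A}(F,\mu,z)$. For the interpolation \eqref{zeros-a}, a real zero $\xi$ of $F$ is handled by direct evaluation: if $\xi\le 0$ then $\mc{A}(F,\mu,\xi)=\mc{A}_1(F,\mu,\xi)=F(\xi)\cdot(\text{integral})=0=f_\mu(\xi)$; if $\xi>0$ then $\mc{A}(F,\mu,\xi)=\mc{A}_2(F,\mu,\xi)=f_\mu(\xi)-F(\xi)\cdot(\text{integral})=f_\mu(\xi)$.

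The main obstacle is the growth bound \eqref{al-growth}. My plan is to split $\C$ at $\re(z)=\alpha_F/2$ and use the more tractable representation on each side. For $x=\re(z)\le\alpha_F/2$, fix $\tau'\in(\alpha_F/2,\alpha_F)$ and use $\mc{A}_1$:
\begin{equation*}
\int_{-\infty}^{0}|g*\dmu(t)|\,e^{-tx}\,\dt\,\le\, C_{\tau'}\int_{-\infty}^{0}e^{(\tau'-x)t}\,\dt\,\le\,\frac{C_{\tau'}}{\tau'-\alpha_F/2}
\end{equation*}
uniformly in $x$, yielding $|\mc{A}(F,\mu,z)|\le C|F(z)|$. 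For $x>\alpha_F/2$, use $\mc{A}_2$: via \eqref{Intro_int_parts} and $|\mu|\le 1$,
\begin{equation*}
|f_\mu(z)|\,\le\,|z|\int_{-\tau}^{\infty}e^{-\lambda x}\,\dl\,=\,\frac{|z|}{x}\,e^{\tau x}\,\le\,\frac{2}{\alpha_F}\,|z|\,e^{\tau x},
\end{equation*}
and fix $\tau'\in(\max(0,\tau_1),\alpha_F/2)$ (possible since $\tau_1\le 0$) so that $\int_0^\infty e^{(\tau'-x)t}\,\dt\le 1/(x-\tau')$ is uniformly bounded for $x>\alpha_F/2$, giving $|F(z)|\int_0^\infty|g*\dmu(t)|e^{-tx}\,\dt\le C|F(z)|$. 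Adding the estimates yields \eqref{al-growth}. The only subtle point is the limiting case $F(0)=0$, which forces $\tau_1=0$; but both of the above choices of $\tau'$ are strictly positive, so the argument is unaffected.
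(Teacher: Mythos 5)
Your proposal is correct and takes essentially the same route as the paper: the same Fubini computation on the overlap strip splitting the product as $F(z)\cdot(z/F(z))\cdot(f_\mu(z)/z)$, the same division of $\C$ at $\re(z)=\alpha_F/2$ for the growth bound, and the same direct evaluation at the zeros of $F$. Two minor points the paper handles that you gloss over: the case $\mc{N}(F)=2$, where Lemma \ref{qualitative_g_c} only gives $g\in C^{0}$ and the identity $g*\dmu=g'*\mu$ with its growth bound requires a direct verification; and your claimed bound $|g*\dmu(t)|\le C_{\tau'}e^{\tau' t}$ actually needs $\tau'\in(0,\alpha_F)$ rather than $(\tau_1,\alpha_F)$ (integrability of $g'$ near $-\infty$ forces $\tau'>0$), which is harmless since every $\tau'$ you subsequently choose is positive.
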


\begin{proof}
We have already noted in \eqref{Intro_int_parts} that $z \mapsto f_{\mu}(z)$ is analytic in $\re(z) >0$ when $\mu$ satisfies (H1) - (H2). If $\mc{N}(F) \geq 3$, from \eqref{Sec2_equ_conv} and Lemma \ref{qualitative_g_c} (iii) we see that the integrals on the right-hand sides of  \eqref{Def_A_1} and \eqref{Def_A_2} converge absolutely and define analytic functions in the stated half-planes.  If $\mc{N}(F) = 2$, it can be verified directly that $g$ is continuous and $C^1$ by parts, and that the function $g'$ thus obtained has at most one discontinuity and still satisfies the growth condition \eqref{Lem4_growth}. Therefore \eqref{Sec2_equ_conv} holds and, as before, this suffices to establish the absolute convergence and analiticity of \eqref{Def_A_1} and \eqref{Def_A_2} in the stated half-planes.

\smallskip

Now let $0 < x < \alpha_F$. Using \eqref{Sec2_equ_conv}, \eqref{gc-trafo} and \eqref{Intro_int_parts} we get
\begin{align*}
\mc{A}_1(F,\mu,x)   - \mc{A}_2(F,\mu,x) & = -f_{\mu}(x)  + F(x) \int_{-\infty}^{\infty} g'*\mu(t)\, e^{-tx}\, \dt\\
& = -f_{\mu}(x)  + F(x) \int_{-\infty}^{\infty} \int_{-\infty}^{\infty} g'(t-\lambda) \,\mu(\lambda)\,e^{-tx}\,\dl\,\dt\\
& = -f_{\mu}(x)  + F(x) \int_{-\infty}^{\infty} \left(\int_{-\infty}^{\infty} g'(t-\lambda) \,e^{-tx}\,\dt\right)\, \mu(\lambda)\,\dl\\
& = -f_{\mu}(x)  + F(x) \int_{-\infty}^{\infty} \left(\int_{-\infty}^{\infty} g'(s) \,e^{-sx}\,\ds\right) e^{-\lambda x}\,\mu(\lambda)\,\dl\\
& = -f_{\mu}(x)  + \int_{-\infty}^{\infty} x \,e^{-\lambda x}\,\mu(\lambda)\,\dl\\
& = 0.
\end{align*}
This implies that $\mc{A}_1(F,\mu,z) = \mc{A}_2(F,\mu,z)$ in the strip $0 < \re(z) < \alpha_F$. Hence, $z\mapsto \mc{A}_1(F, \mu,z)$ and $z\mapsto \mc{A}_2(F, \mu,z)$ are analytic continuations of each other and this defines the entire function $z \mapsto \mc{A}(F, \lambda, z)$. The integral representations for $\mc{A}$ and \eqref{Sec2_equ_conv} imply, for $\re(z)\le \alpha_F/2$, that
\begin{align}\label{Sec_ILP_eq1}
\begin{split}
|\mc{A}(F, \mu,z)|& \le |F(z)| \int_{-\infty}^0 |g'|*\mu (t) \,e^{-t\,\re(z) } \,\dt\\
&  \le |F(z)| \int_{-\infty}^0 |g'|*\mu(t) \,e^{-t \,\alpha_F/2} \,\dt\,,
\end{split}
\end{align}
while for $\re(z)\ge \alpha_F/2$ we have
\begin{equation}\label{Sec_ILP_eq2}
|\mc{A}(F, \mu,z)|\le |f_{\mu}(z)| +|F(z)| \int_0^\infty |g'|*\mu(t) \,e^{-t \,\alpha_F/2} \,\dt.
\end{equation}
Since $\supp(\mu) \subset [-\tau, \infty)$ we use \eqref{Intro_int_parts} and (H2) to obtain, for $\re(z)\ge \alpha_F/2$,  
\begin{align}\label{Sec_ILP_eq3}
\begin{split}
|f_{\mu}(z)| & =  \left|\int_{-\tau}^{\infty} z \, e^{-\lambda z}\,\mu(\lambda)\,\dl\right| \leq |z| \left|\int_{-\tau}^{\infty}  e^{-\lambda \,\re(z)}\,\dl\right| \\
& = \frac{|z|}{\re(z)}\, e^{\tau \, \re(z)} \leq \frac{2|z|}{\alpha_F}\, e^{\tau \, \re(z)}.
\end{split}
\end{align}
Estimates \eqref{Sec_ILP_eq1}, \eqref{Sec_ILP_eq2} and \eqref{Sec_ILP_eq3} plainly verify \eqref{al-growth}. The remaining identity \eqref{zeros-a} follows from the definition of $\mc{A}$.
\end{proof}

\begin{proposition} \label{M-L-construction}
Let $F$ be a Laguerre-P\'{o}lya function that has a double zero at the origin. Let $g= g_{\alpha_F/2}$ and assume that $F(\alpha_F/2)>0$ $($in case $\alpha_F = +\infty$, let $g = g_1$ and assume $F(1) >0$ \!$)$. Let $\mu$ be a signed Borel measure on $\R$ satisfying {\rm (H1) - (H2)}, and let $f_{\mu}$ be defined by \eqref{Intro_Def_f_mu}. With $z \mapsto \mc{A}(F,\mu,z)$ defined by Proposition \ref{Prop5_int}, consider the entire functions $z \mapsto L(F,\mu,z)$ and  $z \mapsto M(F,\mu,z)$ defined by
\begin{equation}\label{L-def}
L(F,\mu,z) = \mc{A}(F,\mu,z) + g*\dmu(0) \frac{F(z)}{z} 
\end{equation}
and 
\begin{equation}\label{M-def}
M(F,\mu,z) = L(F,\mu,z) + \frac{2F(z)}{F''(0)z^2}.
\end{equation}
The following propositions hold.

\smallskip

\begin{enumerate}
\item[(i)] We have 
\begin{align}\label{minorant-ineq}
F(x)\big\{f_{\mu}(x) - L(F,\mu,x)\big\}\ge 0
\end{align}
for all $x \in \R$ and
\begin{align}\label{minorant-interpolation}
L(F,\mu,\xi) = f_{\mu}(\xi)
\end{align}
for all $\xi \in \R$ with $F(\xi)=0$.

\smallskip

\item[(ii)] We have 
\begin{align}\label{majorant-ineq}
F(x)\big\{M(F,\mu,x) - f_{\mu}(x)\big\}\ge 0
\end{align}
for all $x \in \R$ and
\begin{align}\label{majorant-interpolation}
M(F,\mu,\xi) = f_{\mu}(\xi)
\end{align}
for all $\xi \in \R\setminus\{0\}$ with $F(\xi)=0$. At $\xi =0$ we have
\begin{align}\label{majorant-interpolation-2}
M(F,\mu,0) = 1.
\end{align}

\item[(iii)] The equality 
\begin{equation} \label{L-M-psi-eq}
\big|M(F,\mu,x) - f_{\mu}(x)\big| +  \big|f_{\mu}(x) - L(F,\mu,x)\big|=\frac{2|F(x)|}{x^2F''(0)}
\end{equation}
holds for all $x \in \R$.
\end{enumerate}
\end{proposition}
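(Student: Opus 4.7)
My plan is to reduce all four claims to a single positivity statement: the function $t\mapsto g*d\mu(t)$ is nondecreasing on $\R$ with derivative pointwise bounded by $2/F''(0)$.

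\emph{Entireness and interpolation.} Since $F$ has a double zero at $0$, both $F(z)/z$ and $F(z)/z^2$ are entire, so $L(F,\mu,\cdot)$ and $M(F,\mu,\cdot)$ are entire. For a nonzero real $\xi$ with $F(\xi)=0$, Proposition \ref{Prop5_int} gives $\mc{A}(F,\mu,\xi)=f_\mu(\xi)$ while $F(\xi)/\xi=F(\xi)/\xi^2=0$, establishing \eqref{minorant-interpolation} and \eqref{majorant-interpolation}. At $\xi=0$: $\mc{A}(F,\mu,0)=F(0)\int_{-\infty}^0 g*d\mu(t)\,dt=0$ and $\lim_{z\to0}F(z)/z=F'(0)=0$ give $L(F,\mu,0)=0=f_\mu(0)$; since $\lim_{z\to0}F(z)/z^2=F''(0)/2$, we get $M(F,\mu,0)=1$, which is \eqref{majorant-interpolation-2}.

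\emph{Reducing the sign inequalities to a bound on $g*d\mu$.} Combining the definitions of $L$ and $M$ with the representations $\mc{A}_1,\mc{A}_2$ from Proposition \ref{Prop5_int} and the elementary Laplace identities $1/x=\int_0^\infty e^{-tx}\,dt$, $1/x^2=\int_0^\infty t\, e^{-tx}\,dt$ for $x>0$ (with the analogues over $(-\infty,0)$ for $x<0$), a direct computation yields, for $x>0$,
\begin{equation*}
f_\mu(x)-L(F,\mu,x) = F(x)\int_0^\infty\!\big[g*d\mu(t)-g*d\mu(0)\big]e^{-tx}\,dt,
\end{equation*}
\begin{equation*}
M(F,\mu,x)-f_\mu(x) = F(x)\int_0^\infty\!\left[\tfrac{2t}{F''(0)}-\big(g*d\mu(t)-g*d\mu(0)\big)\right]e^{-tx}\,dt,
\end{equation*}
and symmetric expressions for $x<0$ involving integration over $(-\infty,0)$. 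Multiplication by $F(x)$ produces the factor $F(x)^2\ge 0$, so \eqref{minorant-ineq} and \eqref{majorant-ineq} both follow from the pointwise estimate
\begin{equation}\label{bd_plan}
0\,\le\,\sgn(t)\big(g*d\mu(t)-g*d\mu(0)\big)\,\le\,\frac{2|t|}{F''(0)}\qquad(t\in\R).
\end{equation}

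\emph{Key positivity lemma.} Write $F(z)=z^2\tilde F(z)$; then $\tilde F$ is Laguerre-P\'olya with $\tilde F(0)=F''(0)/2>0$, no zero at the origin, and the same smallest positive zero $\alpha_F$. Integration by parts twice in \eqref{gc-trafo} applied to $z^2/F(z)=1/\tilde F(z)$, justified by the growth bounds of Lemma \ref{qualitative_g_c}(iii) (which make the boundary terms vanish after choosing $\tau$ appropriately on each side of $0$), identifies $g''$ as the frequency function associated with $1/\tilde F$ on a strip containing the origin. Then Lemma \ref{qualitative_g_c}(ii) gives $g''(t)\ge0$ for all $t$, while evaluating the resulting Laplace representation at $z=0$ yields $\int_\R g''(t)\,dt=1/\tilde F(0)=2/F''(0)$. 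By \eqref{Sec2_equ_conv}, $(g*d\mu)'(t)=g''*\mu(t)=\int g''(t-\lambda)\mu(\lambda)\,d\lambda$, and hypothesis (H2) then gives $0\le(g*d\mu)'(t)\le2/F''(0)$. Integrating over $[0,t]$ or $[t,0]$ proves \eqref{bd_plan}.

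\emph{Part (iii).} From (i) and (ii), $f_\mu-L$ and $M-f_\mu$ both share the sign of $F(x)$ (or vanish when $F(x)=0$), and since $F''(0)>0$ the difference $M-L=2F(x)/(F''(0)x^2)$ also shares this sign. Consequently $|f_\mu-L|+|M-f_\mu|=|M-L|=2|F(x)|/(F''(0)x^2)$, giving \eqref{L-M-psi-eq}. The main technical point is the third step---rigorously identifying $g''$ as the frequency function of $1/\tilde F$ by integrating by parts twice in the Laplace representation---which requires only the exponential bounds already provided by Lemma \ref{qualitative_g_c}(iii).
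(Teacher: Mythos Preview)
Your approach is essentially identical to the paper's: both derive the same integral representations for $f_\mu-L$ and $M-f_\mu$, reduce (i) and (ii) to the two-sided bound \eqref{bd_plan}, and establish that bound by identifying $g''$ with the (nonnegative) frequency function of $z^2/F(z)=1/\tilde F(z)$ and using $\int g''=2/F''(0)$ together with (H2). Part (iii) is handled the same way in both.

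There is one small gap you should be aware of. Your ``key positivity lemma'' invokes Lemma~\ref{qualitative_g_c} for $\tilde F$, which requires $\mc{N}(\tilde F)\ge 2$, i.e.\ $\mc{N}(F)\ge 4$; and the double integration by parts needs $g\in C^2$ with the stated exponential decay on $g,g'$, which Lemma~\ref{qualitative_g_c} only supplies when $\mc{N}(F)\ge 4$. The paper handles $\mc{N}(F)=2,3$ by explicit computation: for $\mc{N}(F)=2$ one has $F(z)=\tfrac12 F''(0)e^{bz}z^2$, $g(t)=\tfrac{2}{F''(0)}(t-b)\chi_{(b,\infty)}(t)$, and \eqref{bd_plan} is checked directly; for $\mc{N}(F)=3$ the function $g''$ exists but may have a single jump discontinuity, and one verifies directly that it is nonnegative with total mass $2/F''(0)$. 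These edge cases are routine but should be mentioned to make the argument complete.
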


\begin{proof}
{\it Part} (i). For $x< 0$, using \eqref{Sec2_equ_conv}, \eqref{Def_A_1} and \eqref{L-def} we get
\begin{align}\label{Sec2_Prop6_eq1}
f_{\mu}(x)- L(F,\mu,x)  & = - F(x) \int_{-\infty}^0 \big\{ g'*\mu(t) - g'*\mu(0) \big\}\, e^{-tx}\, \dt,
\end{align}
and, for $x>0$, using \eqref{Sec2_equ_conv}, \eqref{Def_A_2} and \eqref{L-def} we get 
\begin{align}\label{Sec2_Prop6_eq2}
f_{\mu}(x) - L(F,\mu,x)  & =  F(x) \int_{0}^{\infty} \big\{ g'*\mu(t) - g'*\mu(0) \big\}\, e^{-tx}\, \dt.
\end{align}
If $\mc{N}(F) \geq 4$, integration by parts in \eqref{gc-trafo} shows that the Laplace transforms of $g'$ and $g''$ in the strip $0 < \re(z) < \alpha_F$ are $z/F(z)$ and $z^2/F(z)$, respectively (here we use Lemma \ref{qualitative_g_c}  (iii) to eliminate the boundary terms). Since $F(\alpha_F/2) > 0$, we conclude by Lemma \ref{qualitative_g_c}  (ii) that $g'$ and $g''$ are nonnegative on $\R$. In particular, $g'$ is also nondecreasing on $\R$. If $\mc{N}(F) = 2$ or $3$, it can be verified directly that $g'$ is nondecreasing on $\R$. In either case, this implies that $g'*\mu$ is nondecreasing, and \eqref{minorant-ineq} and \eqref{minorant-interpolation} (for $\xi \neq0$) then follow from \eqref{Sec2_Prop6_eq1} and \eqref{Sec2_Prop6_eq2}. For $\xi =0$ we see directly from \eqref{zeros-a} and \eqref{L-def} that $L(F, \mu, 0) = 0$.

\smallskip

\noindent {\it Part} (ii). For $x< 0$, using \eqref{Sec2_equ_conv}, \eqref{Def_A_1} and \eqref{M-def} we get
\begin{align}\label{Sec2_Prop6_eq3}
M(F,\mu,x) - f_{\mu}(x)  & = F(x) \int_{-\infty}^0 \left\{ g'*\mu(t) - g'*\mu(0) -\frac{2t}{F''(0)} \right\}\, e^{-tx}\, \dt,
\end{align}
and, for $x>0$, using \eqref{Sec2_equ_conv}, \eqref{Def_A_2} and \eqref{M-def} we get 
\begin{align}\label{Sec2_Prop6_eq4}
M(F,\mu,x) - f_{\mu}(x)  & = -F(x) \int_{0}^{\infty} \left\{ g'*\mu(t) - g'*\mu(0) -\frac{2t}{F''(0)} \right\}\, e^{-tx}\, \dt.
\end{align}
In order to prove \eqref{majorant-ineq} it suffices to verify that 
\begin{equation}\label{Prop6_Suff_Prove}
\big| g'*\mu(t) - g'*\mu(0)\big|\leq \frac{2 |t|}{F''(0)}
\end{equation}
for all $t \in \R$. 

\smallskip

If $\mc{N}(F) \geq 4$, we have already noted that the Laplace transform of $g''$ in the strip $0 < \re(z) < \alpha_F$ is $z^2/F(z)$. Since $F(z)/z^2$ does not vanish at the origin, we see from Lemma \ref{qualitative_g_c} that $g''(t)$ is nonnegative and decays exponentially as $|t| \to \infty$. By a direct verification, the same holds for $\mc{N}(F) =3$, where $g''$ might have one discontinuity. Thus $g''$ is integrable on $\R$ and by \eqref{gc-trafo} we find 
\begin{equation}\label{Prop6_asym}
\int_{-\infty}^{\infty} g''(t)\,\dt = 2 F''(0)^{-1}.
\end{equation}
We are now in position to prove \eqref{Prop6_Suff_Prove} for $\mc{N}(F) \geq 3$. We have already noted in part (i) that $g'$ is a nondecreasing function. Therefore, for $t >0$, we use (H2) and \eqref{Prop6_asym} to get
\begin{align*}
g'*\mu(t) - g'*\mu(0) & = \int_{-\infty}^{\infty} \big\{ g'(t - \lambda) - g'(-\lambda)\big\}\,\mu(\lambda)\,\dl\,\\
&  \leq \int_{-\infty}^{\infty} \int_0^t g''(s-\lambda)\,\ds \,\dl\\
& =2 \,t\,F''(0)^{-1}.
\end{align*}
An analogous argument holds for $t<0$. If $\mc{N}(F) =2$, we have $F(z) = \tfrac{1}{2}F''(0)\,e^{bz}\,z^2$ and $g(t) = \frac{2}{F''(0)}(t-b)\,\chi_{(b,\infty)}(t)$, and \eqref{Prop6_Suff_Prove} can be verified directly.

\smallskip

For $\xi \neq 0$, the interpolation property \eqref{majorant-interpolation} follows directly from \eqref{Sec2_Prop6_eq3} and \eqref{Sec2_Prop6_eq4}. At $\xi =0$, since $L(F, \mu, 0)=0$, it follows from \eqref{M-def} that $M(F, \mu, 0)=1$. 

\smallskip

\noindent{\it Part} (iii). Identity \eqref{L-M-psi-eq} follows easily from \eqref{L-def}, \eqref{M-def}, \eqref{minorant-ineq} and \eqref{majorant-ineq}.
\end{proof}


\section{Proofs of the main results}

\subsection{Proof of Theorem \ref{thm1}} Recall from \eqref{Intro_H3} that under (H3) we have 

$$f_{\mu}(0^+) = \lim_{x \to 0^+}f_{\mu}(x) =1.$$

\subsubsection{Optimality} Let $L$ and $M$ be real entire functions of exponential type at most $2 \tau(E)$ such that $L(x) \leq f_{\mu}(x) \leq M(x)$ for all $x \in \R$ and 
\begin{equation*}
\int_{-\infty}^{\infty} \big\{M(x) - L(x)\big\} \, |E(x)|^{-2}\,\dx < \infty.
\end{equation*}
Since $(M-L)$ is nonnegative on $\R$, by \cite[Theorem 15]{HV} (or alternatively \cite[Lemma 14]{CL3}) we may write 
$$M(z) - L(z) = U(z)U^*(z)$$ 
with $U \in \H(E)$. Since $f_{\mu}(0^-) =0$ and $f_{\mu}(0^+) =1$, we find that $|U(0)|^2  = M(0) - L(0)  \geq 1$. From the reproducing kernel identity and the Cauchy-Schwarz inequality, it follows that
\begin{align}\label{Proof_Thm1_eq1}
1 \leq |U(0)|^2  = \big|\langle U, K(0,\cdot)\rangle_E\big|^2 \leq \|U\|_E^2\, \|K(0,\cdot)\|_E^2 =  \|U\|_E^2 \, K(0,0),
\end{align}
and therefore
\begin{equation}\label{Proof_Thm1_eq2}
\int_{-\infty}^{\infty} \big\{M(x) - L(x)\big\} \, |E(x)|^{-2}\,\dx = \int_{-\infty}^{\infty} |U(x)|^2 \, |E(x)|^{-2}\,\dx = \|U\|_E^2 \geq \frac{1}{K(0,0)}.
\end{equation}
This establishes \eqref{Intro_answer_1}. Moreover, equality in \eqref{Proof_Thm1_eq1} (and thus in \eqref{Proof_Thm1_eq2}) happens if and only if $U(z) = c\,K(0,z)$ with $|c| = K(0,0)^{-1}$. This implies that we must have 
\begin{equation}\label{Equality_cond}
M(z) - L(z) = \frac{K(0,z)^2}{K(0,0)^2}.
\end{equation}

\subsubsection{Existence} By multiplying $E$ by a complex constant of absolute value $1$, we may assume without loss of generality that $E(0) \in \R$. Since $E$ is a Hermite-Biehler function of bounded type, we see that $E^*$ also has bounded type. The companion function $B$ defined by \eqref{Intro_companion} is then a real entire function of bounded type with only real zeros. By \cite[Problem 34]{B} (see \cite{KW} for a generalization) we conclude that $B$ belongs to the Laguerre-P\'{o}lya class. The function $B$ has exponential type and it is clear that $\tau(B) \leq \tau(E)$. Note also that $B$ has a simple zero at $z=0$ (since $E(0) \neq0$ we have $K(0,0) >0$ and, by \eqref{Intro_K_z_z}, $z=0$ cannot be a double zero of $B$).

\smallskip

Applying Proposition \ref{M-L-construction} to the function $B^2(z)$, we construct the entire functions 
\begin{equation}\label{Def_L_mu}
L_{\mu}(z) = L(B^2, \mu, z)
\end{equation} 
and 
\begin{equation}\label{Def_M_mu}
M_{\mu}(z) = M(B^2, \mu, z).
\end{equation}
It follows from \eqref{minorant-ineq} and \eqref{majorant-ineq} that 
\begin{equation*}
L_{\mu}(x) \leq f_{\mu}(x) \leq M_{\mu}(x)
\end{equation*} 
for all $x \in \R$. From \eqref{al-growth}, \eqref{L-def} and \eqref{M-def} if follows that $L_{\mu}$ and $M_{\mu}$ have exponential type at most $2 \tau(E)$. Finally, from \eqref{Intro_K_w_z}, \eqref{Intro_K_z_z}, \eqref{L-def} and \eqref{M-def} we have that
\begin{align*}
M_{\mu}(z) - L_{\mu}(z) = \frac{B^2(z)}{B'(0)^2\,z^2} = \frac{K(0,z)^2}{K(0,0)^2},
\end{align*}
and as we have seen in \eqref{Equality_cond}, this is the condition for equality in \eqref{Intro_answer_1}.

\subsubsection{Uniqueness} From the equality condition \eqref{Equality_cond} and the existence of an optimal pair $\{L_{\mu},M_{\mu}\}$ we conclude that this pair must be unique.

\subsection{Proof of Theorem \ref{thm2}}

\subsubsection{Optimality} This follows as in the optimality part of Theorem \ref{thm1}, just observing that 
$$\wt{f}_{\mu}(0^-) =-1  \ \ \ {\rm and} \ \ \  \wt{f}_{\mu}(0^+) =1.$$

\subsubsection{Existence} We use Proposition \ref{M-L-construction} with the Laguerre-P\'{o}lya functions $B^2(z)$ and its reflection $B^2(-z)$ to define 
\begin{equation}\label{Def_L_mu_t}
\wt{L}_{\mu}(z) = L(B^2(z), \mu, z) - M(B^2(-z), \mu, -z)
\end{equation}
and
\begin{equation}\label{Def_M_mu_t}
\wt{M}_{\mu}(z) = M(B^2(z), \mu, z) - L(B^2(-z), \mu, -z).
\end{equation}
These are real entire functions of exponential type at most $2\tau(E)$ that satisfy
\begin{equation*}
\wt{L}_{\mu}(x) \leq \wt{f}_{\mu}(x) \leq \wt{M}_{\mu}(x)
\end{equation*} 
for all $x \in \R$. As before, from \eqref{Intro_K_w_z}, \eqref{Intro_K_z_z}, \eqref{L-def} and \eqref{M-def} we find that
\begin{align*}
\wt{M}_{\mu}(z) - \wt{L}_{\mu}(z) = \frac{2B^2(z)}{B'(0)^2\,z^2} = \frac{2K(0,z)^2}{K(0,0)^2},
\end{align*}
and this is the condition for equality in \eqref{Intro_answer_1_o}.

\subsubsection{Uniqueness} It follows as in the proof of Theorem \ref{thm1}.

\subsection{Further results} Without assuming (H3) it is possible to solve the minorant problem for $f_{\mu}$.  However, we do have to assume that the companion function that generates the nodes of interpolation does not belong to the space $\H(E)$.

\begin{corollary}\label{Cor7}
Let $E$ be a Hermite-Biehler function of bounded type in $\U$ such that $E(0) > 0$. Let $\mu$ be a signed Borel measure on $\R$ satisfying  {\rm (H1) - (H2)}. Assume that $\supp(\mu) \subset [-2\tau(E), \infty)$ and let $f_{\mu}$ be defined by \eqref{Intro_Def_f_mu}. Assume that $B \notin \H(E)$. Let $L_{\mu}$ be the real entire function of exponential type at most $2\tau(E)$ defined by \eqref{Def_L_mu}. If $L:\C \to \C$ is a real entire function of exponential type at most $2 \tau(E)$ such that
\begin{equation*}
L(x) \leq f_{\mu}(x)
\end{equation*}
for all $x \in \R$, then
\begin{equation}\label{Add_res_1}
\int_{-\infty}^{\infty} \big\{f_{\mu}(x) - L(x)\big\}\, |E(x)|^{-2}\,\dx \geq \int_{-\infty}^{\infty} \big\{f_{\mu}(x) - L_{\mu}(x)\big\}\, |E(x)|^{-2}\,\dx.
\end{equation} 
\end{corollary}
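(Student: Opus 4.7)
The plan is to follow the optimality argument of Theorem~\ref{thm1}, replacing the Cauchy-Schwarz/reproducing kernel inequality with a Gaussian quadrature formula at the zeros of $B$. We may assume that the left-hand side of \eqref{Add_res_1} is finite, so that the nonnegative function $(f_{\mu} - L)|E|^{-2}$ lies in $L^1(\R)$. Set $D(z) = L_{\mu}(z) - L(z)$; this is a real entire function of exponential type at most $2\tau(E)$. To see that $D\cdot|E|^{-2}\in L^1(\R)$, I would appeal to Proposition~\ref{M-L-construction}(iii) with $F = B^2$, together with \eqref{Intro_K_w_z}, \eqref{Intro_K_z_z}, and the identity $K(0,x) = E(0)B(x)/(\pi x)$, to obtain
\begin{equation*}
|f_{\mu}(x) - L_{\mu}(x)| \;\leq\; \frac{2\,|B(x)|^2}{x^2\,B'(0)^2} \;=\; \frac{2\,K(0,x)^2}{K(0,0)^2}
\end{equation*}
for all $x\in\R$. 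Since $K(0,\cdot)\in\H(E)$, the right-hand side lies in $L^1(\R, |E(x)|^{-2}\,\dx)$, hence so does $f_{\mu} - L_{\mu}$, and therefore so does $D = (f_{\mu} - L) - (f_{\mu} - L_{\mu})$.

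Next I would verify that $D(\xi)\ge 0$ at every real zero $\xi$ of $B$. At a real zero $\xi\ne 0$, the interpolation identity \eqref{minorant-interpolation} gives $L_{\mu}(\xi) = f_{\mu}(\xi)$, so $D(\xi) = f_{\mu}(\xi) - L(\xi)\ge 0$. At $\xi = 0$, definition \eqref{Intro_Def_f_mu} gives $f_{\mu}(0) = 0$, and since $0$ is a zero of $B^2$ we have $L_{\mu}(0) = f_{\mu}(0) = 0$ by \eqref{minorant-interpolation}, so $D(0) = -L(0) \ge -f_{\mu}(0) = 0$.

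The final step is to invoke the de Branges Gaussian quadrature formula associated to the zeros of $B$: under the hypothesis $B\notin\H(E)$, for every entire function $D$ of exponential type at most $2\tau(E)$ with $D\cdot|E|^{-2}\in L^1(\R)$ one has
\begin{equation*}
\int_{-\infty}^{\infty} D(x)\,|E(x)|^{-2}\,\dx \;=\; \sum_{\xi\,:\,B(\xi)=0}\frac{D(\xi)}{K(\xi,\xi)}.
\end{equation*}
This is the $L^1$-version of de Branges' sampling theorem \cite[Theorem~22]{B}, used in essentially this form in \cite[Section~2]{HV} and \cite[Section~3]{CL3}. Each summand on the right is nonnegative by the previous paragraph, so the integral is nonnegative, which upon rearrangement is \eqref{Add_res_1}.

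The main technical obstacle is precisely the invocation of this $L^1$ quadrature formula: de Branges' original statement is an $L^2$-isometry about $|F|^2$ for $F\in\H(E)$, and its extension to integrable functions of exponential type $2\tau(E)$ requires exactly the hypothesis $B\notin\H(E)$ to kill a potential mean-type/boundary contribution at infinity. Modulo this (by now standard) extension, the argument above is routine.
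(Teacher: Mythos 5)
Your overall strategy is the paper's: reduce \eqref{Add_res_1} to the nonnegativity of $\sum_{B(\xi)=0}\{f_{\mu}(\xi)-L(\xi)\}/K(\xi,\xi)$ via a quadrature formula at the zeros of $B$, using the interpolation property \eqref{minorant-interpolation} of $L_{\mu}$ and the value $L_{\mu}(0)=0$. Your preliminary steps are fine: the bound $|f_{\mu}-L_{\mu}|\le 2K(0,x)^2/K(0,0)^2$ from \eqref{L-M-psi-eq} does give $(f_\mu - L_\mu)\in L^1(\R,|E(x)|^{-2}\dx)$, and the sign check $D(\xi)\ge 0$ at each zero of $B$ (including $\xi=0$, where $f_\mu(0)=0=L_\mu(0)$) is correct.

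The gap is exactly where you flag it, and it is not merely a citation issue. The quadrature identity
\begin{equation*}
\int_{-\infty}^{\infty} D(x)\,|E(x)|^{-2}\,\dx = \sum_{B(\xi)=0}\frac{D(\xi)}{K(\xi,\xi)}
\end{equation*}
for an arbitrary \emph{sign-changing} real entire $D$ of exponential type at most $2\tau(E)$ with $D\,|E|^{-2}\in L^1(\R)$ is not stated in \cite[Section 2]{HV} or \cite[Section 3]{CL3}, and it does not follow from $B\notin\H(E)$ alone by a routine limiting argument: what de Branges' Theorem 22 gives is an $L^2$ Parseval identity for elements of $\H(E)$, which yields the quadrature formula only for \emph{nonnegative} integrable $D$ (via the factorization $D=UU^*$, $U\in\H(E)$, of \cite[Theorem 15]{HV}). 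For sign-changing $D$ one must first exhibit a decomposition $D=P-Q$ with $P,Q\ge 0$ on $\R$, of the right exponential type, and each integrable against $|E(x)|^{-2}\dx$ --- and such a decomposition is not automatic. The paper supplies it using the specific structure of $D=L_{\mu}-L$: the majorant $M_{\mu}$ of \eqref{Def_M_mu} (which exists without (H3), even if no longer extremal) satisfies $(M_\mu - f_\mu)\in L^1(\R,|E(x)|^{-2}\dx)$ by \eqref{L-M-psi-eq}, hence $M_{\mu}-L\ge 0$ and $M_{\mu}-L_{\mu}\ge 0$ are both integrable; writing $M_{\mu}-L=UU^*$ and $M_{\mu}-L_{\mu}=VV^*$ with $U,V\in\H(E)$ gives $D=UU^*-VV^*$, and Parseval applied to $U$ and $V$ separately (with respect to the orthogonal basis $\{\ov{E}(\xi)^{-1}K(\xi,\cdot):B(\xi)=0\}$, which is where $B\notin\H(E)$ enters) yields exactly the quadrature sum you want. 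To complete your argument you should either insert this two-term factorization or cite a quadrature theorem whose hypotheses you verify (e.g.\ \cite{L4}), rather than appeal to an unconditional ``$L^1$-version'' of the sampling theorem.
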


\begin{proof}
From \eqref{L-M-psi-eq} and \eqref{Def_L_mu} we observe first that the right-hand side of \eqref{Add_res_1} is indeed finite. If the left-hand side of \eqref{Add_res_1} is $+\infty$ there is nothing to prove. Assume then that $(f_\mu - L) \in L^1(\R, |E(x)|^{-2}\,\dx)$. We use the fact that there exists a majorant $M_{\mu}$ (not necessarily extremal anymore) defined by \eqref{Def_M_mu}, and from \eqref{L-M-psi-eq} we see that $(M_{\mu} - f_\mu) \in L^1(\R, |E(x)|^{-2}\,\dx)$. By the triangle inequality we get 
$(M_{\mu} - L_\mu) \in L^1(\R, |E(x)|^{-2}\,\dx)$ and $(M_{\mu} - L) \in L^1(\R, |E(x)|^{-2}\,\dx)$. Since the last two functions are nonnegative on $\R$, from \cite[Theorem 15]{HV} (or alternatively \cite[Lemma 14]{CL3}) we can write 
$$M_{\mu}(z) - L(z) = U(z)U^*(z)$$
and
$$M_{\mu}(z) - L_{\mu}(z) = V(z)V^*(z),$$
with $U,V \in \H(E)$. This gives us 
$$L_{\mu}(z) - L(z) = U(z)U^*(z) - V(z)V^*(z).$$
Since $B \notin \H(E)$, from \cite[Theorem 22]{B} the set $\{z \mapsto \ov{E}(\xi)^{-1}K(\xi, z);\, B(\xi) = 0\}$ is an orthogonal basis for $\H(E)$ (note here that if $E(\xi) = 0$, the function  $\ov{E}(\xi)^{-1}K(\xi, z)$ has to be interpreted as the appropriate limit). We now use Parseval's identity and the the fact that $L_{\mu}$ interpolates $f_{\mu}$ at the zeros of $B$ to get
\begin{align*}
\int_{-\infty}^{\infty}& \big\{L_{\mu}(x)  - L(x)\big\}\, |E(x)|^{-2}\,\dx  = \int_{-\infty}^{\infty} \big\{|U(x)|^2 - |V(x)|^2\big\}\, |E(x)|^{-2}\,\dx \\
& = \sum_{B(\xi) = 0} \frac{ \big\{|U(\xi)|^2 - |V(\xi)|^2\big\}}{K(\xi,\xi)} =  \sum_{B(\xi) = 0} \frac{ \big\{L_{\mu}(\xi) - L(\xi)\big\}}{K(\xi,\xi)} =  \sum_{B(\xi) = 0} \frac{ \big\{f_{\mu}(\xi) - L(\xi)\big\}}{K(\xi,\xi)}\\
& \geq 0.
\end{align*}
This concludes the proof of the corollary.

\end{proof}

When $f_{\mu} \in L^1(\R, |E(x)|^{-2}\,\dx)$ it is possible to determine the precise values of the optimal integrals in our extremal problem separately. 

\begin{corollary}\label{Prop7_sep}
Let $E$ be a Hermite-Biehler function of bounded type in $\U$ such that $E(0) > 0$.  Let $\mu$ be a signed Borel measure on $\R$ satisfying  {\rm (H1) - (H2)}. Assume that $\supp(\mu) \subset [-2\tau(E), \infty)$ and let $f_{\mu}$ be defined by \eqref{Intro_Def_f_mu}. Assume that 
\begin{equation}\label{Prop7_L1_cond}
\int_{-\infty}^{\infty} |f_{\mu}(x)|\, |E(x)|^{-2}\,\dx <\infty
\end{equation}
and that $B \notin \H(E)$. 
\begin{enumerate}
\item[(i)] Let $L_{\mu}$ be the extremal minorant of exponential type at most $2\tau(E)$ defined by \eqref{Def_L_mu}. We have
\begin{equation}\label{Prop7_eq1_L}
\int_{-\infty}^{\infty} L_{\mu}(x)\, |E(x)|^{-2}\,\dx = \sum_{\stackrel{\xi > 0}{B(\xi)=0}} \frac{f_{\mu}(\xi)}{K(\xi, \xi)}.
\end{equation}
\item[(ii)] Assuming {\rm(H3)}, let $M_{\mu}$ be the extremal majorant of exponential type at most $2\tau(E)$ defined by \eqref{Def_M_mu}. We have
\begin{equation}\label{Prop7_eq2_M}
\int_{-\infty}^{\infty} M_{\mu}(x)\, |E(x)|^{-2}\,\dx = \frac{1}{K(0,0)} + \sum_{\stackrel{\xi > 0}{B(\xi)=0}} \frac{f_{\mu}(\xi)}{K(\xi, \xi)}.
\end{equation}
\end{enumerate}
\end{corollary}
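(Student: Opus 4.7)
The plan is to reduce both integrals to a Parseval identity in $\H(E)$ applied to the nonnegative majorant $M_\mu$. Although $L_\mu$ itself is not nonnegative on $\R$ (it is $\leq f_\mu \equiv 0$ on $(-\infty,0]$), the auxiliary function $M_\mu$ defined by \eqref{Def_M_mu} satisfies $M_\mu \geq f_\mu \geq 0$, and the proof of Theorem \ref{thm1} supplies the explicit identity $M_\mu(z) - L_\mu(z) = K(0,z)^2/K(0,0)^2$, which lies in $L^1(\R,|E(x)|^{-2}\dx)$ with integral $1/K(0,0)$ by the reproducing kernel identity $\|K(0,\cdot)\|_E^2 = K(0,0)$. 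Combining this with the sandwich $L_\mu \leq f_\mu \leq M_\mu$ and the hypothesis \eqref{Prop7_L1_cond} yields $M_\mu \in L^1(\R,|E(x)|^{-2}\dx)$, and \eqref{al-growth} ensures $M_\mu$ has exponential type at most $2\tau(E)$. Note that the construction of $M_\mu$ via Proposition \ref{M-L-construction} does not invoke (H3), so it remains a legitimate auxiliary in part (i).

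With the setup in place, I apply \cite[Theorem 15]{HV} to factor $M_\mu = F F^*$ with $F \in \H(E)$. Under the hypothesis $B \notin \H(E)$, \cite[Theorem 22]{B} gives the orthogonal basis $\{\overline{E(\xi)}^{-1}K(\xi,\cdot) : B(\xi)=0\}$ of $\H(E)$, and Parseval's identity yields
\begin{equation*}
\int_{-\infty}^{\infty} M_\mu(x)\,|E(x)|^{-2}\,\dx = \|F\|_E^2 = \sum_{B(\xi)=0} \frac{|F(\xi)|^2}{K(\xi,\xi)} = \sum_{B(\xi)=0} \frac{M_\mu(\xi)}{K(\xi,\xi)}.
\end{equation*}
By \eqref{majorant-interpolation}--\eqref{majorant-interpolation-2}, $M_\mu(0) = 1$ and $M_\mu(\xi) = f_\mu(\xi)$ at every nonzero real zero of $B$; since $f_\mu \equiv 0$ on $(-\infty,0]$, only $\xi = 0$ and the positive zeros of $B$ contribute, producing \eqref{Prop7_eq2_M}. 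For part (i), I then write $L_\mu = M_\mu - (M_\mu - L_\mu)$, integrate against $|E(x)|^{-2}\dx$, and subtract the $1/K(0,0)$ computed above; the $\xi = 0$ contribution cancels and \eqref{Prop7_eq1_L} follows.

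The only delicate point is the legitimacy of the sampling identity, which is exactly what the hypothesis $B \notin \H(E)$ secures via \cite[Theorem 22]{B}; a small subtlety to watch is that if $E(\xi) = 0$ at some real zero $\xi$ of $B$, then the basis element $\overline{E(\xi)}^{-1}K(\xi,\cdot)$ must be interpreted as the appropriate limit, as already noted in the proof of Corollary \ref{Cor7}. Everything else reduces to bookkeeping using the interpolation data and sign behaviour already supplied by Proposition \ref{M-L-construction}.
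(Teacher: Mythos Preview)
Your proposal is correct and follows essentially the same approach as the paper: factor the nonnegative majorant $M_\mu = FF^*$ with $F \in \H(E)$, invoke the orthogonal basis from \cite[Theorem 22]{B} under the hypothesis $B \notin \H(E)$, and read off the sum via Parseval together with the interpolation data from Proposition \ref{M-L-construction}. The only difference is a small economy in part (i): the paper factors $M_\mu - L_\mu = V V^*$ separately and applies Parseval to both $U$ and $V$, whereas you use the explicit identity $M_\mu - L_\mu = K(0,\cdot)^2/K(0,0)^2$ and compute its integral directly as $1/K(0,0)$, then subtract.
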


\begin{proof} We first prove (ii). The function $M_{\mu}$ is nonnegative on $\R$ and belongs to $L^1(\R, |E(x)|^{-2}\,\dx)$ from \eqref{Prop7_L1_cond} (observe in particular that $E$ cannot have nonnegative zeros in this situation). From \cite[Theorem 15]{HV} (or alternatively \cite[Lemma 14]{CL3}) we can write 
\begin{equation}\label{Dec_M_U_Us}
M_{\mu}(z) = U(z)U^*(z)
\end{equation} 
with $U \in \H(E)$. We use again the fact that the set $\{z \mapsto  \ov{E}(\xi)^{-1}K(\xi, z);\, B(\xi) = 0\}$ is an orthogonal basis for $\H(E)$ since $B \notin \H(E)$ \cite[Theorem 22]{B}. From Parseval's identity and the the fact that $M_{\mu}$ interpolates $f_{\mu}$ at the zeros of $B$ (with $M_{\mu}(0) =1$) we arrive at
\begin{align*}
\int_{-\infty}^{\infty} M_{\mu}(x)\, |E(x)|^{-2}\,\dx & = \int_{-\infty}^{\infty} |U(x)|^2\, |E(x)|^{-2}\,\dx= \sum_{B(\xi) = 0} \frac{|U(\xi)|^2}{K(\xi,\xi)} = \sum_{B(\xi) = 0} \frac{M_{\mu}(\xi)}{K(\xi,\xi)} \\
& =  \frac{1}{K(0,0)} + \sum_{\stackrel{\xi > 0}{B(\xi)=0}} \frac{f_{\mu}(\xi)}{K(\xi, \xi)}.
\end{align*}
This establishes \eqref{Prop7_eq2_M}. 

\smallskip

We now prove (i). In this case, we still have a majorant $M_{\mu}$ (not necessarily extremal anymore) and the factorization \eqref{Dec_M_U_Us} still holds. From \eqref{L-M-psi-eq} we see that $(M_{\mu} - L_\mu) \in L^1(\R, |E(x)|^{-2}\,\dx)$ and we can write again $M_{\mu}(z) - L_{\mu}(z) = V(z) V^*(z)$, with $V \in \H(E)$. This gives us
\begin{equation}\label{Dec_L_U_Us}
L_{\mu}(z) = U(z)U^*(z) - V(z) V^*(z).
\end{equation}
Using Parseval's identity again, and the fact that $L_{\mu}$ interpolates $f_{\mu}$ at the zeros of $B$, we arrive at 
\begin{align*}
\int_{-\infty}^{\infty} L_{\mu}(x)\, |E(x)|^{-2}\,\dx & = \int_{-\infty}^{\infty}\big\{ |U(x)|^2 - |V(x)|^2\big\}\, |E(x)|^{-2}\,\dx= \sum_{B(\xi) = 0} \frac{|U(\xi)|^2 - |V(\xi)|^2}{K(\xi,\xi)} \\
& = \sum_{B(\xi) = 0} \frac{L_{\mu}(\xi)}{K(\xi,\xi)}  =  \sum_{\stackrel{\xi > 0}{B(\xi)=0}} \frac{f_{\mu}(\xi)}{K(\xi, \xi)}.
\end{align*}
This establishes \eqref{Prop7_eq1_L} and completes the proof.
\end{proof}

\begin{corollary}\label{Prop8_sep}
Let $E$ be a Hermite-Biehler function of bounded type in $\U$ such that $E(0) > 0$. Let $\mu$ be a signed Borel measure on $\R$ satisfying  {\rm (H1) - (H2) - (H3)}. Assume that $\supp(\mu) \subset [-2\tau(E), \infty)$ and let $\wt{f}_{\mu}$ be defined by \eqref{Intro_Def_f_mu_o}. Assume that 
\begin{equation}\label{Prop8_L1_cond}
\int_{-\infty}^{\infty} |\wt{f}_{\mu}(x)|\, |E(x)|^{-2}\,\dx <\infty
\end{equation}
and that $B \notin \H(E)$. Let $\wt{L}_{\mu}$ and $\wt{M}_{\mu}$ be the extremal functions of exponential type at most $2\tau(E)$ defined by \eqref{Def_L_mu_t} and \eqref{Def_M_mu_t}, respectively. We have
\begin{equation*}
\int_{-\infty}^{\infty} \wt{L}_{\mu}(x)\, |E(x)|^{-2}\,\dx = -\frac{1}{K(0,0)} + \sum_{\stackrel{\xi \neq 0}{B(\xi)=0}} \frac{\wt{f}_{\mu}(\xi)}{K(\xi, \xi)}
\end{equation*}
and 
\begin{equation*}
\int_{-\infty}^{\infty} \wt{M}_{\mu}(x)\, |E(x)|^{-2}\,\dx = \frac{1}{K(0,0)} + \sum_{\stackrel{\xi \neq 0}{B(\xi)=0}} \frac{\wt{f}_{\mu}(\xi)}{K(\xi, \xi)}.
\end{equation*}
\end{corollary}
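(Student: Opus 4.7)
The plan is to follow the template of Corollary \ref{Prop7_sep}, with the essential new ingredient being a decomposition of $\wt M_\mu$ and $\wt L_\mu$ as \emph{differences} of nonnegative entire functions in $L^1(\R,|E(x)|^{-2}\,\dx)$. The main obstacle is that, unlike $M_\mu$ (which satisfies $M_\mu\ge f_\mu \ge 0$ on $\R$), the odd extremals $\wt M_\mu$ and $\wt L_\mu$ need not be individually nonnegative on $\R$, so \cite[Theorem 15]{HV} cannot be applied to them directly.

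The key observation is that applying Proposition \ref{M-L-construction} to the reflected Laguerre-P\'olya function $\tilde F(w):=B^2(-w)$ (which again has a double zero at the origin, with $\tilde F''(0)=2B'(0)^2$) and evaluating \eqref{M-def} at $w=-z$ gives $M(\tilde F,\mu,-z)-L(\tilde F,\mu,-z)=K(0,z)^2/K(0,0)^2$. Inserting this into \eqref{Def_L_mu_t} and \eqref{Def_M_mu_t} yields the pointwise identities
\begin{align*}
\wt M_\mu(z)+M(\tilde F,\mu,-z)&=M_\mu(z)+\frac{K(0,z)^2}{K(0,0)^2},\\
\wt L_\mu(z)+M(\tilde F,\mu,-z)+\frac{K(0,z)^2}{K(0,0)^2}&=M_\mu(z),
\end{align*}
in which every term is nonnegative on $\R$ (using $M_\mu,\,M(\tilde F,\mu,\cdot)\ge f_\mu\ge 0$).

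Next I would verify that each of the four nonnegative functions belongs to $L^1(\R,|E(x)|^{-2}\,\dx)$. Since $\wt f_\mu=f_\mu$ on $(0,\infty)$ and $f_\mu\equiv 0$ on $(-\infty,0)$, the hypothesis \eqref{Prop8_L1_cond} gives $f_\mu\in L^1(|E|^{-2})$. Combined with $M_\mu-L_\mu=K(0,\cdot)^2/K(0,0)^2\in L^1(|E|^{-2})$ (because $K(0,\cdot)\in\H(E)$), this forces $M_\mu\in L^1(|E|^{-2})$ via $0\le M_\mu-f_\mu\le M_\mu-L_\mu$, and the same argument applied with $\tilde F$ in place of $B^2$ yields $M(\tilde F,\mu,-\cdot)\in L^1(|E|^{-2})$. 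By \cite[Theorem 15]{HV} each of the four nonnegative $L^1$ functions factors as $UU^*$ with $U\in\H(E)$. Since $B\notin\H(E)$, \cite[Theorem 22]{B} provides the orthogonal basis $\{\overline{E(\xi)}^{-1}K(\xi,\cdot):B(\xi)=0\}$ of $\H(E)$, so Parseval gives $\int UU^*\,|E|^{-2}\,\dx=\sum_{B(\xi)=0}(UU^*)(\xi)/K(\xi,\xi)$ for each piece.

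Subtracting the Parseval identities for the two pieces of $\wt M_\mu$ (and likewise for $\wt L_\mu$) cancels the auxiliary terms and produces
\begin{align*}
\int_{-\infty}^{\infty}\wt M_\mu(x)\,|E(x)|^{-2}\,\dx=\sum_{B(\xi)=0}\frac{\wt M_\mu(\xi)}{K(\xi,\xi)},
\end{align*}
and the analogous formula for $\wt L_\mu$. The interpolation data $\wt M_\mu(0)=1$, $\wt L_\mu(0)=-1$, and $\wt M_\mu(\xi)=\wt L_\mu(\xi)=\wt f_\mu(\xi)$ at each nonzero real zero $\xi$ of $B$ — all already established in the existence step of the proof of Theorem \ref{thm2} — then separate out the $\xi=0$ contribution and yield the claimed formulas. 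The main difficulty is finding the right decomposition; once the role of $M(\tilde F,\mu,-\cdot)$ as the auxiliary nonnegative companion is identified, the remainder of the argument is a mechanical parallel of Corollary \ref{Prop7_sep}.
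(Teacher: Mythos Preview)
Your proposal is correct and is essentially the same argument as the paper's. The paper also decomposes $\wt L_\mu$ and $\wt M_\mu$ into signed sums of $UU^*$-terms (using precisely $M_\mu$, $M(\tilde F,\mu,-\cdot)$, and $M_\mu-L_\mu=K(0,\cdot)^2/K(0,0)^2$), applies Parseval over the zeros of $B$ via \cite[Theorem 22]{B}, and reads off the interpolation values from Proposition~\ref{M-L-construction}; your two displayed identities are just a rearrangement of the paper's decomposition $\wt L_\mu=(U_1U_1^*-V_1V_1^*)-U_2U_2^*$ and $\wt M_\mu=U_3U_3^*-(U_4U_4^*-V_4V_4^*)$.
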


\begin{proof} From the integrability condition \eqref{Prop8_L1_cond} we see that $E$ cannot have real zeros and we may use \eqref{Def_L_mu_t}, \eqref{Def_M_mu_t}, \eqref{Dec_M_U_Us} and \eqref{Dec_L_U_Us} to write
\begin{equation*}
\wt{L}_{\mu}(z) = \big(U_1(z)U_1^*(z) - V_1(z)V_1^*(z)\big) - U_2(z)U_2^*(z)
\end{equation*}
and
\begin{equation*}
\wt{M}_{\mu}(z) = U_3(z)U_3^*(z) - \big(U_4(z)U_4^*(z) - V_4(z)V_4^*(z) \big),
\end{equation*}
where $U_i, V_j \in \H(E)$. Once we have completed this passage from $L^1$ to $L^2$, the remaining steps are analogous to the proof of Corollary \ref{Prop7_sep}.
\end{proof}

\section{Periodic analogues}\label{Per_Analogues}
Recall that we write $e(z) = e^{2\pi i z}$ for $z \in \C$. In this section we consider the problem of one-sided approximation of periodic functions by trigonometric polynomials of a given degree, as described in \S \ref{par_Per}. The main tools we use here are the theory of reproducing kernel Hilbert spaces of polynomials and the theory of orthogonal polynomials in the unit circle, and we start by reviewing the terminology and the basic facts of these two well-established subjects. In doing so, we follow the notation of \cite{CL3, Li, S} to facilitate some of the references. 

\subsection{Preliminaries}
\subsubsection{Reproducing kernel Hilbert spaces of polynomials} \label{RKHSP}
We write $\ud = \{z \in \C; |z| <1\}$ for the open unit disc and $\uc$ for the unit circle. Let $n\in\Z^{+}$ and let $\mathcal{P}_n$ be the set of polynomials of degree at most $n$ with complex coefficients. If $Q \in \mc{P}_n$ we define the conjugate polynomial $Q^{*,n}$ by
\begin{equation}\label{antiunitary_map}
Q^{*,n}(z) = z^n \, \overline{Q\big(\bar{z}\,^{-1}\big)}.
\end{equation}
If $Q$ has exact degree $n$, we sometimes omit the superscript $n$ and write $Q^*$ for simplicity.

\smallskip

Let $P$ be a polynomial of exact degree $n+1$ with no zeros on $\uc$ such that 
\begin{align}\label{P-ineq}
|P^*(z)|<|P(z)|
\end{align}
for all $z\in\ud$. We consider the Hilbert space $\dbpn$ consisting of the elements in $\mathcal{P}_n$ with scalar product
\begin{equation}\label{ET_inner_prod}
\langle Q,R \rangle_{\dbpn} = \int_{\R/\Z} Q(e(x)) \,\overline{R(e(x))} \, |P(e(x))|^{-2}\,\dx.
\end{equation}
From Cauchy's integral formula, it follows easily that the reproducing kernel for this finite-dimensional Hilbert space is given by
$$\ms{K}(w,z) = \frac{P(z)\overline{P(w)} - P^*(z)\overline{P^*(w)}}{1-\bar{w}z}\,,$$
i.e. for every $w \in \C$ we have the identity 
$$\langle Q, \ms{K}(w,\cdot)\rangle_{\dbpn} = Q(w).$$
As before, we define the companion polynomials
\begin{equation}\label{Intro_companion_ET}
\ms{A}(z) := \frac12 \big\{P(z) + P^*(z)\big\} \ \ \ {\rm and}  \ \ \ \ms{B}(z) := \frac{i}{2}\big\{P(z) - P^*(z)\big\}\,,
\end{equation}
and we find that $\ms{A} = \ms{A}^*$,  $\ms{B}= \ms{B}^*$ and $P(z) = \ms{A}(z) - i\ms{B}(z)$. Since the coefficients of $z^0$ and $z^{n+1}$ of $P$ do not have the same absolute value (this would contradict \eqref{P-ineq} at $z=0$) the polynomials $\ms{A}$ and $\ms{B}$ have exact degree $n+1$. From \eqref{P-ineq} we also see that $\ms{A}$ and $\ms{B}$ have all of their zeros in $\uc$. 

\smallskip

The reproducing kernel has the alternative representation
\begin{equation}\label{Sec8_Rep2}
\ms{K}(w,z) = \frac2i \left(\frac{\ms{B}(z) \overline{\ms{A}(w)} - \ms{A}(z)\overline{\ms{B}(w)} }{ 1-\bar{w} z}\right).
\end{equation}
Observe that 
$$\ms{K}(w,w) = \langle \ms{K}(w,\cdot),\ms{K}(w,\cdot)\rangle_{\dbpn} \geq 0$$
for all $w \in \C$. If there is $w \in \C$ such that $\ms{K}(w,w)=0$, then $\ms{K}(w,\cdot) \equiv0$ and $Q(w) = 0$ for every $Q\in\mathcal{P}_{n}$, a contradiction. Therefore $\ms{K}(w,w)>0$ for all $w\in\C$. From the representation \eqref{Sec8_Rep2} it follows that $\ms{A}$ and $\ms{B}$ have only simple zeros and their zeros never agree. 

\smallskip

From \eqref{Sec8_Rep2} we see that the sets $\{z \mapsto \ms{K}(\zeta,z);\  \ms{A}(\zeta) =0\}$ and $\{z \mapsto \ms{K}(\zeta,z);\  \ms{B}(\zeta) =0\}$ are orthogonal bases for $\dbpn$ and, in particular, we arrive at Parseval's formula (see \cite[Theorem 2]{Li}) 
\begin{align}\label{poly-parseval}
||Q||^2_{\dbpn}= \sum_{\ms{A}(\zeta) =0} \frac{|Q(\zeta)|^2}{\ms{K}(\zeta,\zeta)} = \sum_{\ms{B}(\zeta) =0} \frac{|Q(\zeta)|^2}{\ms{K}(\zeta,\zeta)}.
\end{align}

\subsubsection{Orthogonal polynomials in the unit circle} The map $x \mapsto e(x)$ allows us to identify measures on $\R/\Z$ with measures on the unit circle $\uc$. Let $\vartheta$ be a nontrivial probability measure on $\R/\Z \sim \uc$ (recall that $\vartheta$ is trivial if it has support on a finite number of points) and consider the space $L^2(\uc, \dvar)$ with inner product given by 
\begin{equation*}
\langle f,g\rangle_{L^2(\uc, \dvar)} = \int_{\uc} f(z)\,\overline{g(z)}\, \dvar(z) = \int_{\R/\Z} f(e(x))\,\overline{g(e(x))}\, \dvar(x). 
\end{equation*}
We define the {\it monic orthogonal polynomials} $\Phi_n(z) = \Phi_n(z;\dvar)$ by the conditions 
$$\Phi_n(z) = z^n + \text{lower order terms}\,;\ \ \ \ \ \  \langle \Phi_n, z^j\rangle_{L^2(\uc, \dvar)} =0\qquad (0\le j<n);$$
and we define the {\it orthonormal polynomials} by $\varphi_n = c_n\Phi_n/||\Phi_n||_2$, where $c_n$ is a complex number of absolute value one such that $\varphi_n(1) \in \R$ (this normalization will be used later). Observe that 
\begin{equation}\label{antiunitary_2}
\langle Q^{*,n},R^{*,n}\rangle_{L^2(\uc, \dvar)}  = \langle R,Q\rangle_{L^2(\uc, \dvar)} 
\end{equation} 
for all polynomials $Q,R \in \mc{P}_n$, where the conjugation map $*$ was defined in \eqref{antiunitary_map}. The next lemma collects the relevant facts for our purposes from B. Simon's survey article \cite{S}.

\begin{lemma} \label{Sec8_Lem25} 
Let $\vartheta$ be a nontrivial probability measure on $\R/\Z$.
\begin{enumerate}
\item[(i)] $\varphi_n$ has all its zeros in $\ud$ and $\varphi_n^*$ has all its zeros in $\C\backslash\overline{\ud}$. 
\smallskip
\item[(ii)] Define a new measure $\vartheta_n$ on $\R/\Z$ by
$$\dvar_n (x)= \frac{\dx}{\big|\varphi_n(e(x); \d\vartheta)\big|^2},$$
Then $\vartheta_n$ is a probability measure on $\R/\Z$, $\varphi_j(z;\d\vartheta) =  \varphi_j(z;\d\vartheta_n)$ for $j=0,1,\ldots,n$ and for all $Q,R \in \mathcal{P}_{n}$ we have
\begin{equation}\label{Sec8_equ_measures}
\langle Q, R \rangle_{L^2(\uc,\d\vartheta)} = \langle Q, R \rangle_{L^2(\uc,\d\vartheta_n)}.\end{equation}
\end{enumerate}
\end{lemma}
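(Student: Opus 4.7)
Both parts are classical results in the theory of orthogonal polynomials on the unit circle, and the plan is to extract them from \cite{S} with short proofs.

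For part (i), I would exploit the characterization of $\Phi_n$ as the unique monic polynomial in $\mathcal{P}_n$ minimizing $\|\cdot\|_{L^2(\uc,\dvar)}$. Suppose $\Phi_n(z_0) = 0$ with $|z_0| \geq 1$ and write $\Phi_n(z) = (z-z_0)\,T(z)$ with $T$ monic of degree $n-1$. If $|z_0| > 1$, the competitor $\widehat{\Phi}(z) = (z - 1/\bar{z}_0)\,T(z)$ is monic of the same degree, and the identity $|w - 1/\bar z_0| = |w - z_0|/|z_0|$, valid for $|w|=1$, gives $\|\widehat{\Phi}\|^2_{L^2(\dvar)} = \|\Phi_n\|^2_{L^2(\dvar)}/|z_0|^2 < \|\Phi_n\|^2_{L^2(\dvar)}$, contradicting minimality. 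The boundary case $|z_0| = 1$ requires a barycenter argument: one minimizes $\|(z-a)\,T\|^2_{L^2(\dvar)}$ over $a \in \C$, with optimum $a_* = \int z\,|T|^2\,\dvar \big/ \int |T|^2\,\dvar$. Since $\vartheta$ is nontrivial and $T \not\equiv 0$, the probability measure $|T|^2\,\dvar / \int |T|^2\,\dvar$ on $\uc$ has infinite support, forcing $|a_*| < 1$; thus $a_* \neq z_0$ and $(z-a_*)\,T$ again yields a strictly smaller norm. Hence every zero of $\Phi_n$, and of $\varphi_n$, lies in $\ud$; the claim for $\varphi_n^*$ follows from \eqref{antiunitary_map}, which sends a zero $\zeta$ of $\varphi_n$ to a zero $1/\bar{\zeta}$ of $\varphi_n^*$ with modulus $>1$.

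For part (ii), the key step is to establish that $\varphi_n(\,\cdot\,;\dvar)$ is orthogonal to $\mathcal{P}_{n-1}$ in $L^2(\uc, \dvar_n)$. For $P \in \mathcal{P}_{n-1}$, since $\overline{\varphi_n(z)}/|\varphi_n(z)|^2 = 1/\varphi_n(z)$ on $\uc$, a change of variable yields
\begin{equation*}
\int_{\R/\Z} P(e(x))\,\overline{\varphi_n(e(x))}\,\dvar_n(x) = \frac{1}{2\pi i}\oint_{|z|=1} \frac{P(z)}{z\,\varphi_n(z)}\,\dd z.
\end{equation*}
By part (i), all poles of the integrand lie inside $\ud$, and the degree bound $\deg P \leq n-1$ gives decay $O(|z|^{-2})$ at infinity; deforming the contour to $|z|=R$ and letting $R \to \infty$ then shows the integral vanishes. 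Combined with the trivial identity $\|\varphi_n\|^2_{L^2(\dvar_n)} = \int_{\R/\Z} \dx = 1$, this forces $\Phi_n(\,\cdot\,;\dvar)$ to be a monic polynomial of degree $n$ orthogonal to $\mathcal{P}_{n-1}$ in $L^2(\dvar_n)$, with the same norm $1/|\kappa_n|$ in both inner-product spaces; by uniqueness of the monic orthogonal polynomial, $\Phi_n(\,\cdot\,;\dvar_n) = \Phi_n(\,\cdot\,;\dvar)$, whence $\varphi_n(\,\cdot\,;\dvar_n) = \varphi_n(\,\cdot\,;\dvar)$.

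To conclude, I would invoke the Christoffel-Darboux formula (see \cite{S}), which for any nontrivial positive measure $\mu$ on $\uc$ expresses the reproducing kernel of $\mathcal{P}_n$ in $L^2(\uc,\d\mu)$ entirely in terms of $\varphi_n(\,\cdot\,;\d\mu)$:
\begin{equation*}
K_n^{\mu}(z,w) = \frac{\varphi_n^*(z;\d\mu)\,\overline{\varphi_n^*(w;\d\mu)} - z\bar{w}\,\varphi_n(z;\d\mu)\,\overline{\varphi_n(w;\d\mu)}}{1 - z\bar{w}}.
\end{equation*}
Since $\varphi_n$ agrees for $\vartheta$ and $\vartheta_n$, the two kernels coincide, and so the inner products they induce on $\mathcal{P}_n$ must agree, proving \eqref{Sec8_equ_measures}. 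Setting $Q = R = 1$ yields $\vartheta_n(\R/\Z) = \vartheta(\R/\Z) = 1$, so $\vartheta_n$ is a probability measure; running Gram-Schmidt on $1, z, \ldots, z^n$ now produces the same orthonormal sequence with respect to either measure, giving $\varphi_j(\,\cdot\,;\dvar_n) = \varphi_j(\,\cdot\,;\dvar)$ for $0 \leq j \leq n$. The main subtlety is the boundary case $|z_0|=1$ in part (i), where the naive reflection degenerates and the barycenter argument becomes essential; the Christoffel-Darboux route in part (ii) is likewise crucial, as it sidesteps an otherwise intricate residue computation needed to verify $\int |\varphi_n|^{-2}\,\dx = 1$ directly.
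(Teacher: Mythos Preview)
Your proof is correct. The paper itself does not give an argument at all: it simply cites \cite[Theorem 4.1]{S} for part (i) and \cite[Theorem 2.4, Proposition 4.2, Theorem 4.3]{S} for part (ii). What you have written is essentially a self-contained reconstruction of those results. The minimization/reflection argument for (i), including the barycenter step to rule out zeros on $\uc$, is the standard proof, and your contour-integral computation showing $\varphi_n(\,\cdot\,;\dvar)\perp\mc{P}_{n-1}$ in $L^2(\dvar_n)$, followed by the Christoffel--Darboux identity $K_n(w,z)=(1-\bar wz)^{-1}\big(\varphi_n^*(z)\overline{\varphi_n^*(w)}-z\bar w\,\varphi_n(z)\overline{\varphi_n(w)}\big)$ to transfer the full inner product, is exactly the mechanism behind the cited results in \cite{S}. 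The only implicit step worth making explicit is that $\dvar_n$ is nontrivial (immediate, since by part (i) it has a strictly positive continuous density with respect to Lebesgue measure), so that monic orthogonal polynomials for $\dvar_n$ exist and are unique; once that is noted, your deduction that the two reproducing kernels on $\mc{P}_n$ coincide, and hence the two inner products coincide, is clean. In short: same approach as the paper, but with the details supplied rather than outsourced.
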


\begin{proof} (i) This is \cite[Theorem 4.1]{S}.

\smallskip

\noindent (ii) This follows from \cite[Theorem 2.4, Proposition 4.2 and Theorem 4.3]{S}.
\end{proof}

Let $n \geq 0 $ and $\varphi_{n+1}(z)  =  \varphi_{n+1}(z;\d\vartheta)$. By Lemma \ref{Sec8_Lem25} (i) and the maximum principle we have
\begin{align*}\label{HB-analogue}
|\varphi_{n+1}(z)| < |\varphi_{n+1}^*(z)|
\end{align*}
for all $z\in \ud$. By Lemma \ref{Sec8_Lem25} (ii) we note (Christoffel-Darboux formula) that $\mathcal{P}_n$ with the scalar product $\langle \cdot,\cdot\rangle_{L^2(\uc,\d\vartheta)}$ is a reproducing kernel Hilbert space with reproducing kernel given by 
\begin{equation}\label{Sec8_defKn}
\ms{K}_{n}(w,z) =\frac{\varphi^*_{n+1}(z)\, \overline{\varphi_{n+1}^*(w)} - \varphi_{n+1}(z)\,\overline{\varphi_{n+1}(w)}}{1-\bar{w}z}.
\end{equation}
Observe that $\varphi^*_{n+1}$ plays the role of $P$ in \S \ref{RKHSP}. As before, we define the two companion polynomials (here we use the subscript according to the degree of the polynomial)
\begin{equation}\label{Sec8_DefAn}
\ms{A}_{n+1}(z) = \frac{1}{2}\big\{\varphi_{n+1}^*(z) + \varphi_{n+1}(z)\big\} \ \ \ \ {\rm and} \ \ \ \ \ms{B}_{n+1}(z) = \frac{i}{2}\big\{ \varphi_{n+1}^*(z) - \varphi_{n+1}(z)\big\},
\end{equation}
and we note that \eqref{poly-parseval} holds.

\smallskip

We now derive the quadrature formula that is suitable for our purposes. This result appears in \cite[Corollary 26]{CL3} and we present a short proof here for convenience.

\begin{proposition}\label{Sec8_Cor26}
Let $\vartheta$ be a nontrivial probability measure on $\R/\Z$ and let $\ms{W}:\C \to \C$ be a trigonometric polynomial of degree at most $N$. Let $\varphi_{N+1}(z) = \varphi_{N+1}(z;\d\vartheta)$ be the $(N+1)$-th orthonormal polynomial in the unit circle with respect to this measure and consider $\ms{K}_N$, $\ms{A}_{N+1}$ and $\ms{B}_{N+1}$ as defined in \eqref{Sec8_defKn} and \eqref{Sec8_DefAn}. Then we have 
\begin{align*}
\int_{\R/\Z} \ms{W}(x)\,\d\vartheta(x)   = \sum_{\stackrel{\xi \in \R/\Z}{\ms{A}_{N+1}(e(\xi))=0}} \frac{\ms{W}(\xi)}{\ms{K}_{N}(e(\xi), e(\xi))} = \ \sum_{\stackrel{\xi \in \R/\Z}{\ms{B}_{N+1}(e(\xi))=0}}\frac{\ms{W}(\xi)}{\ms{K}_{N}(e(\xi), e(\xi))}. 
\end{align*}
\end{proposition}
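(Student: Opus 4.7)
\textbf{Proof plan for Proposition \ref{Sec8_Cor26}.} The plan is to reduce the quadrature identity to an application of Parseval's formula \eqref{poly-parseval} in the reproducing kernel Hilbert space $\mc{H}_N(\varphi_{N+1}^*)$, after checking that the two inner products at play coincide. First, I would observe that, by Lemma \ref{Sec8_Lem25}(i), the polynomial $P := \varphi_{N+1}^*$ has exact degree $N+1$, has no zeros in $\overline{\ud}$, and satisfies $|P^*(z)| = |\varphi_{N+1}(z)| < |\varphi_{N+1}^*(z)| = |P(z)|$ for $z\in\ud$, so it fits the framework of \S\ref{RKHSP}; the companion polynomials of $P$ in that framework are exactly $\ms{A}_{N+1}$ and $\ms{B}_{N+1}$ of \eqref{Sec8_DefAn}, both of exact degree $N+1$ with simple zeros on $\uc$, and the reproducing kernel is $\ms{K}_N$ of \eqref{Sec8_defKn}.

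Next, by Lemma \ref{Sec8_Lem25}(ii) we have $\langle Q,R\rangle_{L^2(\uc,\d\vartheta)} = \langle Q,R\rangle_{L^2(\uc,\d\vartheta_N)} = \langle Q,R\rangle_{\mc{H}_N(P)}$ for every $Q,R\in\mc{P}_N$, since $\d\vartheta_N(x) = |P(e(x))|^{-2}\dx$ is exactly the density used in \eqref{ET_inner_prod}. Polarizing Parseval's identity \eqref{poly-parseval} therefore yields
\begin{equation*}
\langle Q, R \rangle_{L^2(\uc,\d\vartheta)} \;=\; \sum_{\ms{A}_{N+1}(\zeta)=0} \frac{Q(\zeta)\,\overline{R(\zeta)}}{\ms{K}_N(\zeta,\zeta)} \;=\; \sum_{\ms{B}_{N+1}(\zeta)=0} \frac{Q(\zeta)\,\overline{R(\zeta)}}{\ms{K}_N(\zeta,\zeta)}
\end{equation*}
for all $Q,R\in\mc{P}_N$.

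To extract the quadrature, I would evaluate the identity above on the monomials $Q(z)=z^j$ and $R(z)=z^l$ with $0\le j,l\le N$. For any integer $k$ with $|k|\le N$, choose $j,l\in\{0,\dots,N\}$ with $j-l=k$; since the zeros $\zeta$ of $\ms{A}_{N+1}$ (resp.\ $\ms{B}_{N+1}$) lie on $\uc$, $\bar\zeta = \zeta^{-1}$, so $\zeta^j\overline{\zeta^l}=\zeta^{k}$. Writing $\zeta=e(\xi)$ gives
\begin{equation*}
\int_{\R/\Z} e(kx)\,\d\vartheta(x) \;=\; \sum_{\stackrel{\xi\in\R/\Z}{\ms{A}_{N+1}(e(\xi))=0}} \frac{e(k\xi)}{\ms{K}_N(e(\xi),e(\xi))},
\end{equation*}
and the analogous identity for the zeros of $\ms{B}_{N+1}$. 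Finally, I would write $\ms{W}(x)=\sum_{k=-N}^{N}a_k\,e(kx)$ and sum these identities against the coefficients $a_k$, which by linearity delivers the claimed quadrature formula.

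The only real checkpoint is the clean identification of the two Hilbert space structures via Lemma \ref{Sec8_Lem25}(ii) together with the fact that the zeros of $\ms{A}_{N+1}$ and $\ms{B}_{N+1}$ lie on $\uc$ (so that $\bar\zeta=\zeta^{-1}$ converts the conjugate monomial $\overline{z^l}$ into a power of $z$); once these are in place, the argument is just polarization of Parseval and linearity in $k$, with no further calculation required.
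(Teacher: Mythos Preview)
Your argument is correct and takes a genuinely different route from the paper. The paper first reduces to the case of a \emph{real} trigonometric polynomial $\ms{W}$, subtracts its minimum $\tau$ to make it nonnegative, invokes the Riesz--Fej\'er theorem to write $\ms{W}-\tau = |Q(e(\cdot))|^2$ with $Q\in\mc{P}_N$, and then applies Parseval \eqref{poly-parseval} directly to $|Q|^2$ together with \eqref{Sec8_equ_measures}; the constant $\tau$ is handled by writing it as $|\tau_1|^2-|\tau_2|^2$, and the general case follows by splitting $\ms{W}$ into real and imaginary parts. Your approach instead polarizes Parseval to obtain the bilinear identity $\langle Q,R\rangle_{L^2(\uc,\d\vartheta)}=\sum_\zeta Q(\zeta)\overline{R(\zeta)}/\ms{K}_N(\zeta,\zeta)$, and then tests it on monomials $Q=z^j$, $R=z^l$ with $j-l=k$, using $|\zeta|=1$ to turn $\zeta^j\overline{\zeta^l}$ into $\zeta^k$. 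This is more elementary in that it avoids the Riesz--Fej\'er factorization entirely and yields the quadrature for the full complex span $\{e(k\,\cdot):|k|\le N\}$ in one stroke; the paper's route, on the other hand, makes the connection with nonnegative trigonometric polynomials (and hence with the one-sided approximation theme of the section) more transparent. One small notational slip: the measure $|P(e(x))|^{-2}\dx=|\varphi_{N+1}(e(x))|^{-2}\dx$ is $\d\vartheta_{N+1}$ in the notation of Lemma~\ref{Sec8_Lem25}(ii), not $\d\vartheta_N$; this is harmless since the lemma then gives \eqref{Sec8_equ_measures} for all $Q,R\in\mc{P}_{N+1}\supset\mc{P}_N$.
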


\begin{proof} Write
$$\ms{W}(z) = \sum_{k=-N}^{N} a_k \,e(kz)$$
and assume first that $\ms{W}$ is real valued on $\R$, i.e. $a_k = \overline{a_{-k}}$. Let $\tau = \min_{x\in\R} \ms{W}(x).$
Then $z \mapsto \ms{W}(z) - \tau$ is a real trigonometric polynomial of degree at most $N$ that is nonnegative on $\R$. By the Riesz-F\'{e}jer theorem there exists a polynomial $Q \in \mc{P}_N$ such that
$$\ms{W}(z) - \tau = Q(e(z))\,\overline{Q(e(\ov{z}))}$$
for all $z \in \C$. Writing $\tau= |\tau_1|^2 - |\tau_2|^2$, and using \eqref{Sec8_equ_measures} and \eqref{poly-parseval}, we obtain 
\begin{align*}
\int_{\R/\Z} \ms{W}(x)\,\d\vartheta(x) &= \int_{\R/\Z} \big\{|Q(e(x))|^2 +|\tau_1|^2 - |\tau_2|^2\big\} \,\d\vartheta(x) \\
& =  \int_{\R/\Z} \frac{|Q(e(x))|^2 +|\tau_1|^2 - |\tau_2|^2}{\big|\varphi_{N+1}(e(x))\big|^2}\ \dx \\
& = \sum_{\stackrel{\xi \in \R/\Z}{\ms{B}_{N+1}(e(\xi))=0}}  \frac{|Q(e(\xi))|^2 +|\tau_1|^2 - |\tau_2|^2}{\ms{K}_{N}(e(\xi), e(\xi))}\\
& =\sum_{\stackrel{\xi \in \R/\Z}{\ms{B}_{N+1}(e(\xi))=0}} \frac{\ms{W}(\xi)}{\ms{K}_{N}(e(\xi), e(\xi))},
\end{align*}
and analogously at the nodes given by the roots of $\ms{A}_{N+1}$. The general case follows by writing $\ms{W}(z) = \ms{W}_1(z)- i\ms{W}_2(z)$, with $\ms{W}_1(z) =  \sum_{k=-N}^N b_k\,e(kz)$ and $\ms{W}_2(z) =  \sum_{k=-N}^N c_k \,e(kz)$, where $b_k = \frac12(a_k + \overline{a_{-k}})$ and $c_k =  \frac i 2 (a_k - \overline{a_{-k}})$. 
\end{proof}

\subsection{Extremal trigonometric polynomials} We now present the solution of the extremal problem \eqref{Trig_EP1} - \eqref{Trig_EP2} for a class of periodic functions with a certain exponential subordination. As described below, this class comes from the periodization of the functions $f_{\mu}$ and $\wt{f}_{\mu}$ defined in \eqref{Intro_Def_f_mu} and \eqref{Intro_Def_f_mu_o}.

\subsubsection{Defining the periodic analogues} Throughout this section we let $\mu$ be a (locally finite) signed Borel measure on $\R$ satisfying conditions (H1') - (H2). The condition (H1') is simply a restriction of our current (H1), namely: 

\smallskip

\begin{enumerate}

\item[(H1')] The measure $\mu$ has support on $[0,\infty)$.

\end{enumerate}

\smallskip

\noindent When convenient, we may require additional properties on $\mu$. The first one is our usual (H3), and we now introduce the following summability condition:

\smallskip

\begin{enumerate}

\item[(H4)] The distribution function $\mu (x):= \mu((-\infty, x])$ verifies

\begin{equation*}
\int_{0}^{\infty} \frac{1}{\lambda^2}\,\mu(\lambda)\,\dl < \infty.
\end{equation*}
\end{enumerate}

\medskip

\noindent For $\lambda >0$ we consider the following truncated function that appears on the right-hand side of \eqref{Intro_int_parts}:

\begin{equation*}
v(\lambda, x) = \left\{
\begin{array}{cc}
xe^{-\lambda x} & {\rm if} \ x >0;\\
0, & {\rm if} \ x \leq 0,\\
\end{array}
\right.
\end{equation*}
and define the $1$-periodic function 
\begin{equation*}
h(\lambda,x) := \sum_{n \in \Z} v(\lambda, x+n) =  \frac{e^{-\lambda(x - \lfloor x \rfloor - \frac12)}\big\{2\sinh(\lambda/2)(x - \lfloor x \rfloor - \frac12) +\cosh(\lambda/2) \big\}}{4\sinh(\lambda/2)^2}.\end{equation*}
If $\mu$ is a signed Borel measure satisfying (H1') - (H2) - (H4) we define the $1$-periodic function 
\begin{align}\label{Def_ET_F_mu}
\ms{F}_{\mu}(x) := \int_{0}^{\infty} h(\lambda,x) \,\mu(\lambda)\,\dl = \sum_{n \in \Z} f_{\mu}(x + n),
\end{align}
where the last equality follows from \eqref{Intro_int_parts} and Fubini's theorem. We observe that $\ms{F}_{\mu}$ is differentiable for $x \notin \Z$ and that 
$$ \ms{F}_{\mu}(0^-) = \ms{F}_{\mu}(0).$$ 
For $0 \leq x \leq 1$ we have
\begin{align*}
h(\lambda,x) &= xe^{-\lambda x} + \frac{e^{-\lambda}}{\big(1 - e^{-\lambda}\big)^2}\, \Big( x e^{-\lambda x} \big(1 - e^{-\lambda}\big) + e^{-\lambda x}\Big),
\end{align*}
and we see from dominated convergence and the computation in \eqref{Intro_H3} that 
\begin{equation*}
\limsup_{x \to 0^+} \ms{F}_{\mu}(x) \leq \ms{F}_{\mu}(0) + 1,
\end{equation*}
and {\it under the additional condition} (H3) we have
\begin{equation}\label{ET_Sec4_H3_cond}
\ms{F}_{\mu}(0^+) = \ms{F}_{\mu}(0^-) + 1 = \ms{F}_{\mu}(0) + 1.
\end{equation}

We now define the odd counterpart. First we let, for $\lambda >0$, 
\begin{equation*}
\wt{v}(\lambda, x) := v(\lambda, x) - v(\lambda, -x)
\end{equation*}
and consider the $1$-periodic function
\begin{align*}
\wt{h}(\lambda,x) & := \sum_{n \in \Z} \wt{v}(\lambda, x+n) \\
& = \frac{- \tfrac12 \cosh (\lambda /2) \sinh\big( \lambda(x - \lfloor x \rfloor - \tfrac12)\big) +( x - \lfloor x \rfloor - \tfrac12)  \sinh(\lambda/2)\cosh\big( \lambda(x - \lfloor x \rfloor - \tfrac12)\big)}{\sinh(\lambda/2)^2}. 
\end{align*}
If $\mu$ is a signed Borel measure satisfying (H1') - (H2) we define the odd $1$-periodic function
\begin{align}\label{Def_ET_F_tilde_mu}
\wt{\ms{F}}_{\mu}(x) := \int_{0}^{\infty} \wt{h}(\lambda,x) \,\mu(\lambda)\,\dl.
\end{align}
Note that {\it we do not have to assume} (H4) in order to define $\wt{\ms{F}}_{\mu}$ in \eqref{Def_ET_F_tilde_mu} since, for all $x\in \R$, the function $\lambda \mapsto \wt{h}(\lambda,x)$ is $O(\lambda)$ as $\lambda \to 0$. If, however, we have (H4), the function $\ms{F}_{\mu}$ is well-defined and we have
\begin{align*}
\wt{\ms{F}}_{\mu}(x) = \ms{F}_{\mu}(x) - \ms{F}_{\mu}(-x) = \sum_{n \in \Z} \wt{f}_{\mu}(x + n),
\end{align*}
verifying that $\wt{\ms{F}}_{\mu}$ is in fact the periodization of $\wt{f}_{\mu}$. We note that $\wt{\ms{F}}_{\mu}$ is differentiable for $x \notin \Z$. For $0\leq  x\leq 1$ we may write alternatively
\begin{align}\label{Estimate_h_tilde}
\begin{split}
\wt{h}(\lambda,x) & = h(\lambda,x) - h(\lambda,1-x)\\
& = xe^{-\lambda x} - (1-x)e^{-\lambda (1-x)} \\
&  \ \ \ \ \ \ \ \ \ \ \ + \frac{e^{-\lambda}}{\big(1 - e^{-\lambda}\big)^2}\, \Big( x e^{-\lambda x} \big(1 - e^{-\lambda}\big) + e^{-\lambda x} - (1-x) e^{-\lambda (1-x)} \big(1 - e^{-\lambda}\big) -  e^{-\lambda (1-x)} \Big),
\end{split}
\end{align}
and we may use dominated convergence in \eqref{Def_ET_F_tilde_mu} together with the computation in \eqref{Intro_H3} to conclude that, under (H1') - (H2) - (H3), we have
\begin{equation*}
\wt{\ms{F}}_{\mu}(0^\pm) = \pm 1. 
\end{equation*}

We highlight the fact that when $\mu$ is the Dirac delta measure, we recover the sawtooth function (multiplied by $-2$) in \eqref{Def_ET_F_tilde_mu}. In fact, observing that for $x \notin \Z$ we have
$$\wt{h}(\lambda,x)  = - \frac{\partial}{\partial \lambda} \left(\frac{\sinh\big(-\lambda(x - \lfloor x \rfloor - \frac12)\big)}{\sinh(\lambda/2)}\right),$$
we find, for $x \notin \Z$, 
$$\wt{\ms{F}}_{\mu}(x) = \int_{0}^{\infty} \wt{h}(\lambda,x) \,\dl = -2(x - \lfloor x \rfloor - \tfrac12).$$
This is expected since the corresponding $\wt{f}_{\mu}$ is the signum function. In particular, the results we present below extend the work of Li and Vaaler \cite{LV} on the sawtooth function.

\subsubsection{Main results} The following two results provide a complete solution of the extremal problem \eqref{Trig_EP1} - \eqref{Trig_EP2} for the periodic functions $\ms{F}_{\mu}$ and $\wt{\ms{F}}_{\mu}$ defined in \eqref{Def_ET_F_mu} and \eqref{Def_ET_F_tilde_mu}, with respect to arbitrary nontrivial probability measures $\vartheta$. This completes the framework initiated in \cite{CL3}, where this extremal problem was solved for an analogous class of even periodic functions with exponential subordination. In what follows we let $\varphi_{N+1}(z) = \varphi_{N+1}(z;\d\vartheta)$ be the $(N+1)$-th orthonormal polynomial in the unit circle with respect to this measure and consider $\ms{K}_N, \ms{A}_{N+1}, \ms{B}_{N+1}$ as defined in \eqref{Sec8_defKn} and \eqref{Sec8_DefAn}. 

\begin{theorem} \label{thm11_ET}
Let $\mu$ be a signed Borel measure on $\R$ satisfying {\rm (H1') - (H2) - (H4)}, and let $\ms{F}_{\mu}$ be defined by \eqref{Def_ET_F_mu}. Let $\vartheta$ be a nontrivial probability measure on $\R/\Z$ and $N \in \Z^+$. 

\begin{enumerate}
\item[(i)] If $\ms{L}:\C \to \C$ is a real trigonometric polynomial of degree at most $N$ such that
\begin{equation}\label{Intro_eq_L_ET}
\ms{L}(x) \leq \ms{F}_{\mu}(x) 
\end{equation}
for all $x \in \R/\Z$, then 
\begin{equation}\label{Intro_answer_L_ET}
\int_{\R/\Z}  \ms{L}(x)\, \dvar(x) \leq \frac{\ms{F}_{\mu}(0)}{\ms{K}_N(1,1)}\ +  \sum_{\stackrel{\xi \in \R/\Z\,;\, \xi \neq 0}{\ms{B}_{N+1}(e(\xi))=0}}\frac{\ms{F}_{\mu}(\xi)}{\ms{K}_{N}(e(\xi), e(\xi))}.
\end{equation} 
Moreover, there is a unique real trigonometric polynomial $\ms{L}_{\mu}:\C \to \C$ of degree at most $N$ satisfying \eqref{Intro_eq_L_ET} for which the equality in \eqref{Intro_answer_L_ET} holds.

\smallskip

\item[(ii)] Assume that $\mu$ also satisfies {\rm (H3)}. If $\ms{M}:\C \to \C$ is a real trigonometric polynomial of degree at most $N$ such that
\begin{equation}\label{Intro_eq_M_ET}
\ms{F}_{\mu}(x) \leq \ms{M}(x)  
\end{equation}
for all $x \in \R/\Z$, then 
\begin{equation}\label{Intro_answer_M_ET}
\int_{\R/\Z}  \ms{M}(x)\, \dvar(x) \geq \frac{\ms{F}_{\mu}(0^+)}{\ms{K}_N(1,1)}\ +  \sum_{\stackrel{\xi \in \R/\Z\,;\, \xi \neq 0}{\ms{B}_{N+1}(e(\xi))=0}}\frac{\ms{F}_{\mu}(\xi)}{\ms{K}_{N}(e(\xi), e(\xi))}.
\end{equation} 
Moreover, there is a unique real trigonometric polynomial $\ms{M}_{\mu}:\C \to \C$ of degree at most $N$ satisfying \eqref{Intro_eq_M_ET} for which the equality in \eqref{Intro_answer_M_ET} holds.
\end{enumerate}
\end{theorem}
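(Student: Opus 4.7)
The plan is to imitate the proof of Theorem \ref{thm1}, using the quadrature formula of Proposition \ref{Sec8_Cor26} in place of the reproducing-kernel Cauchy--Schwarz inequality. For the optimality step, the key preliminary observation is that the normalization $\varphi_{N+1}(1)\in\R$ forces $\varphi_{N+1}^*(1)=\overline{\varphi_{N+1}(1)}=\varphi_{N+1}(1)$, so from \eqref{Sec8_DefAn} one gets $\ms{B}_{N+1}(1)=0$; that is, $\xi=0$ is always among the quadrature nodes and contributes the singled-out term $\ms{L}(0)/\ms{K}_N(1,1)$ (resp.\ $\ms{M}(0)/\ms{K}_N(1,1)$) when Proposition \ref{Sec8_Cor26} is applied. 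At every nonzero node the pointwise inequalities \eqref{Intro_eq_L_ET} and \eqref{Intro_eq_M_ET} are used directly. At $\xi=0$ one reads $\ms{L}(0)\le \ms{F}_\mu(0)$ immediately from \eqref{Intro_eq_L_ET}, while for the majorant one takes $x\to 0^+$ in \eqref{Intro_eq_M_ET} and invokes \eqref{ET_Sec4_H3_cond} (this is exactly where hypothesis (H3) enters) to deduce $\ms{M}(0)\ge \ms{F}_\mu(0^+)=\ms{F}_\mu(0)+1$. Summing the weighted inequalities produced by Proposition \ref{Sec8_Cor26} then yields \eqref{Intro_answer_L_ET} and \eqref{Intro_answer_M_ET}.

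For existence, equality in either bound forces the extremizer to interpolate $\ms{F}_\mu$ at every zero of $\ms{B}_{N+1}$, with the value $\ms{F}_\mu(0)$ at $\xi=0$ for the minorant and $\ms{F}_\mu(0^+)$ for the majorant. The plan is to construct such interpolants via the trigonometric analogue of Proposition \ref{M-L-construction}: take the real trigonometric polynomial proportional to $\ms{B}_{N+1}(e(z))\,\overline{\ms{B}_{N+1}(e(\bar z))}$ (suitably symmetrized by a factor $e(-(N+1)z/2)$), which has a double zero at every zero of $\ms{B}_{N+1}$ on $\R/\Z$ and in particular at $\xi=0$, feed it into the interpolation procedure of Section \ref{Interpolation_sec} with $\ms{F}_\mu$ in place of $f_\mu$, and verify that the output is a real trigonometric polynomial of degree at most $N$. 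The sign information \eqref{minorant-ineq} and \eqref{majorant-ineq} transfers under periodization and, together with nonnegativity of the substitute for $F$ on $\R$, delivers $\ms{L}_\mu\le\ms{F}_\mu\le\ms{M}_\mu$ on $\R/\Z$. Uniqueness is then immediate: any two extremizers with identical interpolation data differ by a trigonometric polynomial of degree $\le N$ vanishing at all $N+1$ zeros of $\ms{B}_{N+1}$, and Proposition \ref{Sec8_Cor26} (applied to the square of the difference, or equivalently Parseval's identity \eqref{poly-parseval}) forces it to vanish identically.

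The main technical obstacle I anticipate is verifying that the construction indeed yields a trigonometric polynomial of degree at most $N$ rather than merely an entire function of exponential type $2\pi N$. One route is to exploit the periodicity of both $\ms{F}_\mu$ and the substitute $F$ to collapse the convolution $g\ast d\mu$ from \eqref{gc-def} into a finite partial-fraction sum indexed by the zeros of $\ms{B}_{N+1}$; alternatively, one can reprove the content of Proposition \ref{M-L-construction} intrinsically inside the finite-dimensional space $\dbpn$ with $P=\varphi_{N+1}^*$, constructing $\ms{L}_\mu$ and $\ms{M}_\mu$ as Hermite-type interpolants at the zeros of $\ms{B}_{N+1}$ and then using the representation \eqref{Sec8_Rep2} to read off the pointwise inequalities, mirroring the treatment of the sawtooth function in \cite{LV}. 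A secondary subtlety is the bookkeeping of the jump $\ms{F}_\mu(0^+)-\ms{F}_\mu(0^-)=1$: as in \eqref{M-def}, the majorant must carry an extra term whose effect is precisely to shift the value at $\xi=0$ from $\ms{F}_\mu(0)$ up to $\ms{F}_\mu(0^+)$, which is the source of the asymmetry between the two conclusions of the theorem.
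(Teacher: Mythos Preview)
Your optimality argument matches the paper exactly: the normalization $\varphi_{N+1}(1)\in\R$ gives $\ms{B}_{N+1}(1)=0$, and Proposition \ref{Sec8_Cor26} together with the pointwise bounds (and \eqref{ET_Sec4_H3_cond} for the majorant) yields \eqref{Intro_answer_L_ET} and \eqref{Intro_answer_M_ET}.

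For existence, your instinct to use $\mathfrak{B}_{N+1}(z)=\ms{B}_{N+1}(e(z))\,\overline{\ms{B}_{N+1}(e(\bar z))}$ is right (and no symmetrizing factor is needed, since $\ms{B}_{N+1}^*=\ms{B}_{N+1}$ already makes this a real trigonometric polynomial of degree $N+1$, nonnegative on $\R$, with double zeros at the roots of $\ms{B}_{N+1}$). However, the paper does \emph{not} feed $\ms{F}_\mu$ into the interpolation machine, nor does it redo the construction intrinsically in $\dbpn$. Instead, it applies Proposition \ref{M-L-construction} to the pair $(\mathfrak{B}_{N+1}, f_\mu)$---the non-periodic $f_\mu$ with the periodic Laguerre--P\'olya function---obtaining entire functions $L(\mathfrak{B}_{N+1},\mu,\cdot)$ and $M(\mathfrak{B}_{N+1},\mu,\cdot)$ of exponential type at most $2\pi(N+1)$. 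Hypothesis (H4) is exactly what makes $f_\mu\in L^1(\R)$, hence (via \eqref{L-M-psi-eq}) both $L$ and $M$ lie in $L^1(\R)$. Paley--Wiener then localizes their Fourier transforms, and Poisson summation (which converges pointwise since $L',M'\in L^1(\R)$ by Plancherel--P\'olya) yields $\ms{L}_\mu(z)=\sum_{n\in\Z}L(\mathfrak{B}_{N+1},\mu,z+n)$ and similarly $\ms{M}_\mu$, automatically trigonometric polynomials of degree at most $N$. This is the content of Proposition \ref{ET_lem13}, and it sidesteps entirely the obstacle you anticipated. The inequalities and interpolation properties transfer termwise from Proposition \ref{M-L-construction} because $\mathfrak{B}_{N+1}$ is $1$-periodic.

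Your uniqueness argument has a gap. Two extremal minorants agree at the $N+1$ zeros of $\ms{B}_{N+1}$, but a real trigonometric polynomial of degree $\le N$ lives in a $(2N+1)$-dimensional space, so $N+1$ value constraints are not enough, and Parseval/\eqref{poly-parseval} applies to algebraic polynomials in $\mathcal{P}_N$, not directly to the difference (whose square has trigonometric degree $2N$). The paper instead observes that since $\ms{L}\le\ms{F}_\mu$ with equality at each nonzero node $\xi$, and $\ms{F}_\mu$ is differentiable there, one also has $\ms{L}'(\xi)=\ms{F}'_\mu(\xi)$. This gives $N$ derivative conditions in addition to the $N+1$ value conditions, for $2N+1$ constraints total, which pins down a degree-$N$ trigonometric polynomial uniquely.
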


\begin{theorem}  \label{thm12_ET}
Let $\mu$ be a signed Borel measure on $\R$ satisfying {\rm (H1') - (H2) - (H3)}, and let $\wt{\ms{F}}_{\mu}$ be defined by \eqref{Def_ET_F_tilde_mu}. Let $\vartheta$ be a nontrivial probability measure on $\R/\Z$ and $N \in \Z^+$. 
\begin{enumerate}
\item[(i)] If $\ms{L}:\C \to \C$ is a real trigonometric polynomial of degree at most $N$ such that
\begin{equation}\label{Intro_eq_L_ET_case2}
\ms{L}(x) \leq \wt{\ms{F}}_{\mu}(x) 
\end{equation}
for all $x \in \R/\Z$, then 
\begin{equation}\label{Intro_answer_L_ET_case2}
\int_{\R/\Z}  \ms{L}(x)\, \dvar(x) \leq - \frac{1}{\ms{K}_N(1,1)}\ +  \sum_{\stackrel{\xi \in \R/\Z\,;\, \xi \neq 0}{\ms{B}_{N+1}(e(\xi))=0}}\frac{\wt{\ms{F}}_{\mu}(\xi)}{\ms{K}_{N}(e(\xi), e(\xi))}.
\end{equation} 
Moreover, there is a unique real trigonometric polynomial $\wt{\ms{L}}_{\mu}:\C \to \C$ of degree at most $N$ satisfying \eqref{Intro_eq_L_ET_case2} for which the equality in \eqref{Intro_answer_L_ET_case2} holds.

\smallskip

\item[(ii)] If $\ms{M}:\C \to \C$ is a real trigonometric polynomial of degree at most $N$ such that
\begin{equation}\label{Intro_eq_M_ET_case2}
\ms{F}_{\mu}(x) \leq \ms{M}(x)  
\end{equation}
for all $x \in \R/\Z$, then 
\begin{equation}\label{Intro_answer_M_ET_case2}
\int_{\R/\Z}  \ms{M}(x)\, \dvar(x) \geq \frac{1}{\ms{K}_N(1,1)}\ +  \sum_{\stackrel{\xi \in \R/\Z\,;\, \xi \neq 0}{\ms{B}_{N+1}(e(\xi))=0}}\frac{\wt{\ms{F}}_{\mu}(\xi)}{\ms{K}_{N}(e(\xi), e(\xi))}.
\end{equation} 
Moreover, there is a unique real trigonometric polynomial $\wt{\ms{M}}_{\mu}:\C \to \C$ of degree at most $N$ satisfying \eqref{Intro_eq_M_ET_case2} for which the equality in \eqref{Intro_answer_M_ET_case2} holds.
\end{enumerate}
\end{theorem}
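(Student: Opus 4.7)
The plan is to mirror the strategy of Theorems \ref{thm1} and \ref{thm2} in the polynomial setting, with the quadrature identity of Proposition \ref{Sec8_Cor26} playing the role of the reproducing-kernel Cauchy--Schwarz inequality and the $N+1$ zeros of $\ms{B}_{N+1}$ on $\R/\Z$ replacing the zeros of the companion function $B$.

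For the optimality bound in part (i), I apply Proposition \ref{Sec8_Cor26} directly to the trigonometric polynomial $\ms{L}$ at the nodes $\{\xi \in \R/\Z : \ms{B}_{N+1}(e(\xi))=0\}$. The key observation is that the normalization $\varphi_{N+1}(1)\in\R$ (chosen when passing from $\Phi_{N+1}$ to $\varphi_{N+1}$) forces
$$\ms{B}_{N+1}(1)=\tfrac{i}{2}\bigl(\varphi_{N+1}^*(1)-\varphi_{N+1}(1)\bigr)=\tfrac{i}{2}\bigl(\overline{\varphi_{N+1}(1)}-\varphi_{N+1}(1)\bigr)=0,$$
so $\xi=0$ is one of the nodes. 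Since $\ms{L}$ is continuous and $\ms{L}(x)\le\wt{\ms{F}}_\mu(x)$ pointwise on $\R$, letting $x\to 0^-$ yields $\ms{L}(0)\le \wt{\ms{F}}_\mu(0^-)=-1$, while $\ms{L}(\xi)\le \wt{\ms{F}}_\mu(\xi)$ at every other node. Inserting these estimates into the quadrature identity produces \eqref{Intro_answer_L_ET_case2}. The majorant inequality \eqref{Intro_answer_M_ET_case2} is obtained identically, using instead $\ms{M}(0)\ge \wt{\ms{F}}_\mu(0^+)=1$.

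For existence I would construct $\wt{\ms{L}}_\mu$ and $\wt{\ms{M}}_\mu$ as the unique real trigonometric polynomials of degree $\le N$ that Hermite-interpolate $\wt{\ms{F}}_\mu$ at the $N+1$ nodes where $\ms{B}_{N+1}(e(\xi))=0$: at each nonzero node $\xi$ match both $\wt{\ms{F}}_\mu(\xi)$ and $\wt{\ms{F}}_\mu'(\xi)$, while at $\xi=0$ prescribe the values $-1$ and $+1$ respectively. Counting $2N+1$ linear conditions on the $(2N+1)$-dimensional space of trigonometric polynomials of degree $\le N$ guarantees existence and uniqueness of the interpolants. The genuine task is then to verify the one-sided inequalities $\wt{\ms{L}}_\mu \le \wt{\ms{F}}_\mu \le \wt{\ms{M}}_\mu$ pointwise on $\R/\Z$. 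My approach would be to develop a polynomial analogue of Proposition \ref{M-L-construction}: writing $\wt{\ms{F}}_\mu$ as the integral $\int_0^\infty \wt{h}(\lambda,x)\,\mu(\lambda)\,\d\lambda$ via \eqref{Def_ET_F_tilde_mu}, construct the extremal trigonometric minorant and majorant of each fixed building block $\wt{h}(\lambda,\cdot)$ by a direct polynomial interpolation --- exploiting the monotonicity/convexity structure of $\wt{h}(\lambda,\cdot)$ analogous to the role played by $g'*\mu$ in the proof of Proposition \ref{M-L-construction} --- and then integrate against $\mu$ to recover $\wt{\ms{L}}_\mu$ and $\wt{\ms{M}}_\mu$.

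For uniqueness, if some $\ms{L}$ attains equality in \eqref{Intro_answer_L_ET_case2}, then every inequality in the optimality step must be saturated, which forces $\ms{L}(0)=-1$ and $\ms{L}(\xi)=\wt{\ms{F}}_\mu(\xi)$ at every nonzero node. Continuity and nonnegativity of $\wt{\ms{F}}_\mu-\ms{L}$ on $\R/\Z\setminus\{0\}$ then upgrade each such zero to order at least two, yielding $\ms{L}'(\xi)=\wt{\ms{F}}_\mu'(\xi)$ there. These $2N+1$ conditions uniquely determine $\ms{L}=\wt{\ms{L}}_\mu$; the majorant case is identical. The principal obstacle I foresee is precisely the sign verification for the Hermite interpolant: matching the interpolation data is routine linear algebra, but establishing the global pointwise one-sided inequality demands either a clean polynomial analogue of the Laguerre--Pólya machinery of Section \ref{Interpolation_sec}, or a careful reduction to the fixed-$\lambda$ building blocks $\wt{h}(\lambda,\cdot)$ followed by a delicate integration against the possibly signed measure $\mu$ using (H2) to keep the sign inequalities intact.
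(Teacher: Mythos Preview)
Your optimality and uniqueness arguments are correct and match the paper's exactly.

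For existence, however, your route diverges from the paper's and leaves a genuine gap. The paper does \emph{not} develop a separate polynomial analogue of the Laguerre--P\'olya machinery, nor does it interpolate the building blocks $\wt{h}(\lambda,\cdot)$ one at a time. Instead it observes that the trigonometric polynomial
\[
\mathfrak{B}_{N+1}(z) := \ms{B}_{N+1}(e(z))\,\overline{\ms{B}_{N+1}(e(\ov{z}))}
\]
is itself a $1$-periodic real entire function with only real (double) zeros, and is of bounded type in $\U$; by \cite[Problem~34]{B} it therefore lies in the Laguerre--P\'olya class. Proposition~\ref{M-L-construction} then applies \emph{verbatim} with $F=\mathfrak{B}_{N+1}$, yielding entire one-sided approximants $L(\mathfrak{B}_{N+1},\mu,\cdot)$ and $M(\mathfrak{B}_{N+1},\mu,\cdot)$ to $f_\mu$. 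Proposition~\ref{ET_lem13} then periodizes these via Poisson summation to obtain trigonometric polynomials of degree $\le N$ that are one-sided to $\ms{F}_\mu$; the odd case is assembled exactly as in \eqref{Def_L_mu_t}--\eqref{Def_M_mu_t} by combining $\ms{L}(\mathfrak{B}_{N+1}(z),\mu,z)-\ms{M}(\mathfrak{B}_{N+1}(-z),\mu,-z)$ and its partner. This recycles the sign verification already done in Proposition~\ref{M-L-construction}, so no new convexity argument is needed.

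There is one subtlety your proposal does not confront: the periodization step (Proposition~\ref{ET_lem13}) requires $f_\mu\in L^1(\R)$, i.e.\ hypothesis (H4), which Theorem~\ref{thm12_ET} does \emph{not} assume. The paper handles this in two steps: first solve the problem under (H4); then for general $\mu$ translate to $\mu_n(\Omega)=\mu(\Omega-\tfrac1n)$, which does satisfy (H4), construct $\wt{\ms{L}}_{\mu_n},\wt{\ms{M}}_{\mu_n}$ as above, and pass to the limit using uniform boundedness of the Fourier coefficients plus dominated convergence of $\wt{\ms{F}}_{\mu_n}\to\wt{\ms{F}}_\mu$. Your direct Hermite-interpolation plan, even if the sign check could be carried out, would still need to justify why the construction makes sense without (H4); integrating the interpolants of $\wt{h}(\lambda,\cdot)$ against $\mu(\lambda)\,\d\lambda$ is not obviously convergent near $\lambda=0$ without it.
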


\subsection{Periodic interpolation} Before we proceed to the proofs of Theorems \ref{thm11_ET} and \ref{thm12_ET} we state and prove the periodic version of Proposition \ref{M-L-construction}. Below we keep the notation already used in Section \ref{Interpolation_sec}.

\begin{proposition} \label{ET_lem13}
Let $F$ be a $1$-periodic Laguerre-P\'{o}lya function of exponential type $\tau(F)$. Assume that $F$ has a double zero at the origin and that $F(\alpha_F/2) >0$. Let $\mu$ be a signed Borel measure on $\R$ satisfying {\rm (H1') - (H2) - (H4)}, and let $\ms{F}_{\mu}$ be defined by \eqref{Def_ET_F_mu}.
\begin{itemize}
\item[(i)] The functions $x \mapsto L(F,\mu,x)$ and $x \mapsto M(F,\mu,x)$ belong to $L^1(\R)$.
\smallskip
\item[(ii)] Define the trigonometric polynomials
\begin{equation}\label{ET_Lem13_eq0}
\ms{L}(F, \mu, z) = \sum_{|k| < \frac{\tau(F)}{2\pi}} \widehat{L}(F,\mu,k) \,e(kz)
\end{equation}
and
\begin{equation}\label{ET_Lem13_eq00}
\ms{M}(F, \mu, z) = \sum_{|k| < \frac{\tau(F)}{2\pi}} \widehat{M}(F,\mu,k) \,e(kz).
\end{equation}
Then we have
\begin{equation}\label{ET_Lem13_eq1}
F(x) \,\ms{L}(F, \mu, x) \leq F(x) \,\ms{F}_{\mu}(x) \leq F(x)\, \ms{M}(F, \mu, x)
\end{equation}
for all $x \in \R$. 
\smallskip
\item[(iii)] Moreover, 
\begin{equation}\label{ET_Lem13_eq2}
\ms{L}(F, \mu, \xi)= \ms{F}_{\mu}(\xi) = \ms{M}(F, \mu, \xi) 
\end{equation}
for all $\xi \in \R \setminus \Z$ with $F(\xi) = 0$. At $\xi \in \Z$ we have 
\begin{equation}\label{ET_Lem13_eq3}
\ms{L}(F, \mu, \xi) = \ms{F}_{\mu}(0) \ \ \ \  {\rm and} \ \ \ \ \ms{M}(F, \mu, \xi) = \ms{F}_{\mu}(0) +1.
\end{equation}
\end{itemize}
\end{proposition}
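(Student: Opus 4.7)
My plan is to reduce the periodic claims to the pointwise nonperiodic construction of Proposition \ref{M-L-construction} by periodizing over the integer lattice. The two enabling facts are that $L(F,\mu,\cdot)$ and $M(F,\mu,\cdot)$ lie in $L^1(\R)$, so that periodization is well defined, and that they have exponential type at most $\tau(F)$, so that Poisson summation produces a trigonometric polynomial of the stated degree.

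\emph{Part (i).} Identity \eqref{L-M-psi-eq} gives $M(F,\mu,x) - L(F,\mu,x) = 2F(x)/(x^2 F''(0))$. Since $F$ is $1$-periodic, $|F|$ is bounded on $\R$, and since $F$ has a double zero at $0$, the ratio $F(x)/x^2$ is continuous at the origin and $O(1/x^2)$ at infinity; hence $(M - L)\in L^1(\R)$. Under (H1'), the representation \eqref{Intro_int_parts} and Fubini yield
\[
\int_{\R} f_{\mu}(x)\,\dx \;=\; \int_0^{\infty}\mu(\lambda)\int_0^{\infty} x\,e^{-\lambda x}\,\dx\,\dl \;=\; \int_0^{\infty}\frac{\mu(\lambda)}{\lambda^2}\,\dl,
\]
which is finite by (H4). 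Because $f_{\mu}\ge 0$ on $\R$ and $0 \le M-f_{\mu}\le M - L$, we deduce $M \in L^1(\R)$, and then $L = M-(M-L)\in L^1(\R)$.

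\emph{Part (ii).} Applying the pointwise inequalities \eqref{minorant-ineq} and \eqref{majorant-ineq} at every translate $y=x+n$, and using $F(x+n)=F(x)$ by $1$-periodicity, one sums in $n\in\Z$ (absolutely, by part (i)) to get $F(x)\sum_n L(F,\mu,x+n)\le F(x)\,\ms{F}_{\mu}(x)\le F(x)\sum_n M(F,\mu,x+n)$. On the other hand, $L$ and $M$ are entire of exponential type at most $\tau(F)$: indeed \eqref{al-growth} (with $\tau=0$ by (H1')) bounds $\mc{A}$, while $F/z$ and $F/z^2$ in \eqref{L-def}, \eqref{M-def} have type $\tau(F)$. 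Since $L,M\in L^1(\R)$, Paley--Wiener asserts that $\widehat{L}$ and $\widehat{M}$ are continuous and vanish outside $[-\tau(F)/2\pi,\tau(F)/2\pi]$; continuity then forces the boundary values $\widehat{L}(\pm \tau(F)/2\pi)=\widehat{M}(\pm \tau(F)/2\pi)=0$. Poisson summation thus identifies $\sum_n L(F,\mu,x+n) = \ms{L}(F,\mu,x)$ and $\sum_n M(F,\mu,x+n) = \ms{M}(F,\mu,x)$, proving \eqref{ET_Lem13_eq1}.

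\emph{Part (iii).} Since $F$ is $1$-periodic with a double zero at $0$, the zero set of $F$ is $\Z$-invariant. For $\xi\in\R\setminus\Z$ with $F(\xi)=0$, each $\xi+n$ is a nonzero real zero of $F$, so \eqref{minorant-interpolation} and \eqref{majorant-interpolation} give $L(F,\mu,\xi+n) = M(F,\mu,\xi+n) = f_{\mu}(\xi+n)$; summing yields \eqref{ET_Lem13_eq2}. For $\xi\in\Z$ exactly one translate ($n=-\xi$) lands at the origin, where $L(F,\mu,0)=0$ (from the proof of Proposition \ref{M-L-construction}(i)) and $M(F,\mu,0)=1$ by \eqref{majorant-interpolation-2}; the other translates yield $f_{\mu}(\xi+n)$ via the interpolation identities. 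Using $f_{\mu}(0)=0$ from \eqref{Intro_Def_f_mu}, the two periodic sums collapse exactly to $\ms{F}_{\mu}(0)$ and $\ms{F}_{\mu}(0)+1$, proving \eqref{ET_Lem13_eq3}. I expect the only delicate step is the $L^1$-integrability in part (i); once that is established, Poisson summation and the interpolation bookkeeping follow routinely from the nonperiodic proposition.
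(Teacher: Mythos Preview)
Your approach is essentially the paper's: show $L,M\in L^1(\R)$ via \eqref{L-M-psi-eq} and the integrability of $f_\mu$ from (H4), then periodize using Poisson summation and read off the inequalities and interpolation from Proposition~\ref{M-L-construction}. Two small points deserve care. First, your chain ``$0\le M-f_\mu\le M-L$'' tacitly assumes $F\ge 0$ on $\R$, which is not part of the hypothesis (only $F(\alpha_F/2)>0$ is); the clean fix is to use \eqref{L-M-psi-eq} directly to get $|M-f_\mu|\le 2|F(x)|/(x^2F''(0))\in L^1(\R)$, whence $M\in L^1(\R)$ and then $L=M-(M-L)\in L^1(\R)$. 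Second, for the \emph{pointwise} Poisson identity $\sum_n L(F,\mu,x+n)=\ms{L}(F,\mu,x)$ (and likewise for $M$) you need a bit more than $L\in L^1$ with compactly supported transform; the paper secures this by invoking Plancherel--P\'olya to get $L',M'\in L^1(\R)$, which is enough for the summation to hold at every point. With these two adjustments your argument matches the paper's.
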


\begin{proof} We have already noted, from \eqref{al-growth}, that $z \mapsto L(F,\mu,z)$ and $z \mapsto M(F,\mu,z)$ are entire functions of exponential type at most $\tau(F)$. From \eqref{L-M-psi-eq} we find that 
\begin{equation*}
|L(F,\mu,x)| + |M(F,\mu,x)| \ll f_{\mu}(x) + \frac{1 + |F(x)|}{1 + x^2}
\end{equation*}
for $x \in \R$. Since $F$ is $1$-periodic, it is bounded on the real line. Hence, in order to prove (i), it suffices to verify that $f_{\mu} \in L^1(\R)$. This is a simple application of Fubini's theorem and conditions (H1') - (H2) - (H4). In fact,
\begin{equation*}
\int_0^{\infty} f_{\mu}(x)\,\dx = \int_0^{\infty}\int_0^{\infty} x e^{-\lambda x}\,\mu(\lambda)\,\dl \,\dx= \int_0^{\infty} \frac{1}{\lambda^2}\,\mu(\lambda)\,\dl < \infty.
\end{equation*}
This establishes (i). The Paley-Wiener theorem implies that the Fourier transforms
\begin{equation*}
\widehat{L}(F,\mu,t) = \int_{-\infty}^{\infty} L(F,\mu,x)\,e(-tx)\,\dx  \ \ \ {\rm and} \ \ \ \widehat{M}(F,\mu,t) = \int_{-\infty}^{\infty} M(F,\mu,x)\,e(-tx)\,\dx 
\end{equation*}
are continuous functions supported in the compact interval $[-\tfrac{\tau(F)}{2\pi}, \tfrac{\tau(F)}{2\pi}]$. By a classical result of Plancherel and P\'{o}lya \cite{PP}, the functions  $z \mapsto L'(F,\mu,z)$ and $z \mapsto M'(F,\mu,z)$ also have exponential type at most $\tau(F)$ and belong to $L^1(\R)$. Therefore, the Poisson summation formula holds as a pointwise identity and we have
\begin{equation}\label{ET_Lem13_eq4}
\ms{L}(F, \mu, x) =  \sum_{|k| < \frac{\tau(F)}{2\pi}} \widehat{L}(F,\mu,k) \,e(kx) = \sum_{n\in \Z} L(F,\mu,x +n)
\end{equation}
and
\begin{equation}\label{ET_Lem13_eq5}
\ms{M}(F, \mu, x)  =  \sum_{|k| < \frac{\tau(F)}{2\pi}} \widehat{M}(F,\mu,k) \,e(kx) = \sum_{n\in \Z} M(F,\mu,x +n).
\end{equation}
Using the fact that 
\begin{equation*}
\ms{F}_{\mu}(x) = \sum_{n \in \Z} f_{\mu}(x + n)
\end{equation*}
for all $x \in \R$, \eqref{ET_Lem13_eq1}, \eqref{ET_Lem13_eq2} and \eqref{ET_Lem13_eq3} now follow from \eqref{ET_Lem13_eq4}, \eqref{ET_Lem13_eq5} and Proposition \ref{M-L-construction}, since $F$ is $1$-periodic. This establishes (ii) and (iii).
\end{proof}

\subsection{Proof of Theorem \ref{thm11_ET}}

Recall that we have normalized our orthonormal polynomials $\varphi_{N+1}$ in order to have $\varphi_{N+1}(1) \in \R$. This implies that $\ms{B}_{N+1}(1)=0$.

\subsubsection{Optimality} If $\ms{L}:\C \to \C$ is a real trigonometric polynomial of degree at most $N$ such that
\begin{equation*}
\ms{L}(x) \leq \ms{F}_{\mu}(x) 
\end{equation*}
for all $x \in \R/\Z$, from Proposition \ref{Sec8_Cor26} we find that
\begin{align}\label{ET_Sec4_pf_eq_cond}
\int_{\R/\Z} \ms{L}(x)\,\d\vartheta(x)    = \ \sum_{\stackrel{\xi \in \R/\Z}{\ms{B}_{N+1}(e(\xi))=0}}\frac{\ms{L}(\xi)}{\ms{K}_{N}(e(\xi), e(\xi))} \leq \frac{\ms{F}_{\mu}(0)}{\ms{K}_N(1,1)}\ +  \sum_{\stackrel{\xi \in \R/\Z\,;\, \xi \neq 0}{\ms{B}_{N+1}(e(\xi))=0}}\frac{\ms{F}_{\mu}(\xi)}{\ms{K}_{N}(e(\xi), e(\xi))}.
\end{align}
This establishes \eqref{Intro_answer_L_ET}. Under (H3) recall that we have 
\begin{equation*}
\ms{F}_{\mu}(0^+) =  \ms{F}_{\mu}(0^-) + 1 =\ms{F}_{\mu}(0) + 1.
\end{equation*}
In an analogous way, using Proposition \ref{Sec8_Cor26}, it follows that if $\ms{M}:\C \to \C$ is a real trigonometric polynomial of degree at most $N$ such that
\begin{equation*}
\ms{F}_{\mu}(x) \leq \ms{M}(x) 
\end{equation*}
for all $x \in \R/\Z$ then
\begin{align*}
\int_{\R/\Z} \ms{M}(x)\,\d\vartheta(x)    = \ \sum_{\stackrel{\xi \in \R/\Z}{\ms{B}_{N+1}(e(\xi))=0}}\frac{\ms{M}(\xi)}{\ms{K}_{N}(e(\xi), e(\xi))} \geq \frac{\ms{F}_{\mu}(0^+)}{\ms{K}_N(1,1)}\ +  \sum_{\stackrel{\xi \in \R/\Z\,;\, \xi \neq 0}{\ms{B}_{N+1}(e(\xi))=0}}\frac{\ms{F}_{\mu}(\xi)}{\ms{K}_{N}(e(\xi), e(\xi))}.
\end{align*}
This establishes \eqref{Intro_answer_M_ET}.

\subsubsection{Existence} Define the trigonometric polynomial 
\begin{equation}\label{def_mathfrak_B}
\mathfrak{B}_{N+1}(z) = \ms{B}_{N+1}(e(z))\, \overline{\ms{B}_{N+1}(e(\ov{z}))}.
\end{equation}
Since the polynomial $\ms{B}_{N+1}$ has degree $N+1$ and has only simple zeros in the unit circle, we conclude that the trigonometric polynomial $\mathfrak{B}_{N+1}$ has degree $N+1$, is nonnegative on $\R$ and has only double real zeros. Since every trigonometric polynomial is of bounded type in the upper half-plane $\U$, it follows by \cite[Problem 34]{B} that $\mathfrak{B}_{N+1}$ is a Laguerre-P\'{o}lya function. 

\smallskip

We now use Proposition \ref{ET_lem13} to construct the functions
\begin{align*}
\ms{L}_{\mu}(z)& := \ms{L}(\mathfrak{B}_{N+1}, \mu, z);\\
\ms{M}_{\mu}(z)& := \ms{M}(\mathfrak{B}_{N+1}, \mu, z).
\end{align*}
Since $\mathfrak{B}_{N+1}$ has exponential type $2\pi(N+1)$ we see from \eqref{ET_Lem13_eq0} and \eqref{ET_Lem13_eq00} that $\ms{L}_{\mu}$ and $\ms{M}_{\mu}$ are trigonometric polynomials of degree at most $N$. Since $\mathfrak{B}_{N+1}$ is nonnegative on $\R$ we conclude from \eqref{ET_Lem13_eq1} that 
\begin{equation*}
\ms{L}_{\mu}(x) \leq \ms{F}_{\mu}(x) \leq \ms{M}_{\mu}(x)
\end{equation*}
for all $x \in \R/\Z$. Moreover, from \eqref{ET_Lem13_eq2}, \eqref{ET_Lem13_eq3} and the quadrature formula given by Proposition \ref{Sec8_Cor26}, we conclude that the equality in \eqref{Intro_answer_L_ET} holds. Under the additional condition (H3), we use \eqref{ET_Sec4_H3_cond} to see that the equality in \eqref{Intro_answer_M_ET} also holds.

\subsubsection{Uniqueness} If $\ms{L}: \C \to \C$ is a real trigonometric polynomial of degree at most $N$ satisfying \eqref{Intro_eq_L_ET} for which the equality in \eqref{Intro_answer_L_ET} holds, from \eqref{ET_Sec4_pf_eq_cond} we must have
$$\ms{L}(\xi) = \ms{F}_{\mu}(\xi) = \ms{L}_{\mu}(\xi)$$
for all $\xi \in \R/\Z$ such that $\ms{B}_{N+1}(e(\xi)) = 0$. Since $\ms{F}_{\mu}$ is differentiable at $\R/\Z - \{0\}$, from \eqref{Intro_eq_L_ET} we must also have
$$\ms{L}'(\xi) = \ms{F}'_{\mu}(\xi) = \ms{L}_{\mu}'(\xi)$$
for all $\xi \in \R/\Z - \{0\}$ such that $\ms{B}_{N+1}(e(\xi)) = 0$. These $2N+1$ conditions completely determine a trigonometric polynomial of degree at most $N$, hence $\ms{L} = \ms{L}_{\mu}$. The proof for the majorant is analogous.

\subsection{Proof of Theorem \ref{thm12_ET}}

\subsubsection{Optimality and uniqueness} These follow exactly as in the proof of Theorem \ref{thm11_ET} using the fact that 
\begin{equation*}
\wt{\ms{F}}_{\mu}(0^\pm) = \pm1.
\end{equation*}

\subsubsection{Existence} We proceed with the construction of the extremal trigonometric polynomials in two steps:

\medskip

\noindent {\it Step 1}. Suppose that $\mu$ satisfies (H4). 

\medskip

In this case we know that
\begin{equation}\label{ET_pf_thm12_sym}
\wt{\ms{F}}_{\mu}(x) = \ms{F}_{\mu}(x) - \ms{F}_{\mu}(-x)
\end{equation}
for all $x \in \R$. With the notation of Proposition \ref{ET_lem13} and $\mathfrak{B}_{N+1}$ given by \eqref{def_mathfrak_B}, we define 
\begin{equation}\label{ET_pf_thm12_def_L}
\wt{\ms{L}}_{\mu}(z) = \ms{L}(\mathfrak{B}_{N+1}(z), \mu, z) - \ms{M}(\mathfrak{B}_{N+1}(-z), \mu, -z)
\end{equation}
and
\begin{equation}\label{ET_pf_thm12_def_M}
\wt{\ms{M}}_{\mu}(z) = \ms{M}(\mathfrak{B}_{N+1}(z), \mu, z) - \ms{L}(\mathfrak{B}_{N+1}(-z), \mu, -z).
\end{equation}
It is clear from \eqref{ET_pf_thm12_sym} and \eqref{ET_Lem13_eq1} that 
\begin{equation}\label{ET_pf_thm12_int_1}
\wt{\ms{L}}_{\mu}(x) \leq \wt{\ms{F}}_{\mu}(x) \leq \wt{\ms{M}}_{\mu}(x)
\end{equation}
for all $x \in \R/\Z$. Moreover, from \eqref{ET_Lem13_eq2} and \eqref{ET_Lem13_eq3} we find that 
\begin{equation}\label{ET_pf_thm12_int_2}
\wt{\ms{L}}_{\mu}(\xi)= \wt{\ms{F}}_{\mu}(\xi) = \wt{\ms{M}}_{\mu}(\xi)
\end{equation}
for all $\xi \in \R/\Z - \{0\}$ such that $\ms{B}_{N+1}(e(\xi)) =0$ and 
\begin{equation}\label{ET_pf_thm12_int_3}
\wt{\ms{L}}_{\mu}(0) = -1 \ \ \ \  {\rm and} \ \ \ \ \wt{\ms{M}}_{\mu}(0) = 1.
\end{equation}
Using the quadrature formula given by Proposition \ref{Sec8_Cor26}, we see that equality holds in \eqref{Intro_answer_L_ET_case2} and \eqref{Intro_answer_M_ET_case2}.

\medskip

\noindent {\it Step 2}. The case of general $\mu$. 

\smallskip

For every $n \in \N$ we define a measure $\mu_n$ given by
\begin{equation*}
\mu_n(\Omega) := \mu \big(\Omega - \tfrac{1}{n}\big),
\end{equation*}
where $\Omega \subset \R$ is a Borel set. Note that $\mu_n$ satisfies (H1') - (H2) - (H3) - (H4). Let $\wt{\ms{F}}_n:= \wt{\ms{F}}_{\mu_n}$, and $\wt{\ms{L}}_n:= \wt{\ms{L}}_{\mu_n}$ and $\wt{\ms{M}}_n:= \wt{\ms{M}}_{\mu_n}$ as in \eqref{ET_pf_thm12_def_L} and \eqref{ET_pf_thm12_def_M}. Since properties \eqref{ET_pf_thm12_int_1}, \eqref{ET_pf_thm12_int_2} and \eqref{ET_pf_thm12_int_3} hold for each $n \in \N$, in order to conclude, it suffices to prove that $\wt{\ms{F}}_n$ converges pointwise to $\wt{\ms{F}}_{\mu}$ and that $\wt{\ms{L}}_n$ and $\wt{\ms{M}}_n$ converge pointwise (passing to a subsequence, if necessary) to trigonometric polynomials $\wt{\ms{L}}_{\mu}$ and $\wt{\ms{M}}_{\mu}$.
\smallskip

Observe first that 
\begin{equation}\label{ET_pf_thm12_dom_conv}
\wt{\ms{F}}_n(x) =  \int_{0}^{\infty} \wt{h}\big(\lambda + \tfrac{1}{n},x\big) \,\mu(\lambda)\,\dl\end{equation}
for all $x \in \R$. From \eqref{Estimate_h_tilde} we see that, for $0 \leq x \leq 1$,
\begin{align}\label{Interesting_estimate}
\left|\wt{h}\big(\lambda + \tfrac{1}{n},x\big)\right| \leq xe^{-\lambda x} + (1-x)e^{-\lambda (1-x)} + r(\lambda),
\end{align}
where $r(\lambda)$ is $O(1)$ for $\lambda <1$ and $O(e^{-\lambda})$ for $\lambda \geq 1$, uniformly in $x \in [0,1]$ and $n \in \N$. For any $x \in [0,1)$ the right-hand side of \eqref{Interesting_estimate} belongs to $L^1(\R^+, \mu(\lambda)\,\dl)$, and therefore we may use dominated convergence in \eqref{ET_pf_thm12_dom_conv} to conclude that $\wt{\ms{F}}_n(x) \to \wt{\ms{F}}_{\mu}(x)$ as $n \to \infty$.

\smallskip

From \eqref{L-M-psi-eq}, \eqref{ET_Lem13_eq4} and \eqref{ET_Lem13_eq5} we find that 
\begin{equation}\label{Sec4_bounded_expression}
\wt{\ms{M}}_n(x) - \wt{\ms{L}}_n(x) = \frac{4\,\mathfrak{B}_{N+1}(x)}{\mathfrak{B}_{N+1}''(0)}\sum_{k \in \Z}\frac{1}{(x+k)^2}.
\end{equation}
for all $x \in \R$. Note that the right-hand side of \eqref{Sec4_bounded_expression} is bounded since $\mathfrak{B}_{N+1}$ is a trigonometric polynomial with a double zero at the integers. Therefore, we arrive at 
\begin{equation*}
-\frac{4\,\mathfrak{B}_{N+1}(x)}{\mathfrak{B}_{N+1}''(0)}\sum_{k \in \Z}\frac{1}{(x+k)^2} +\wt{\ms{F}}_n(x) \leq \wt{\ms{L}}_n(x) \leq \wt{\ms{F}}_n(x) \leq \wt{\ms{M}}_n(x) \leq \wt{\ms{F}}_n(x)  + \frac{4\,\mathfrak{B}_{N+1}(x)}{\mathfrak{B}_{N+1}''(0)}\sum_{k \in \Z}\frac{1}{(x+k)^2}.
\end{equation*}
From \eqref{ET_pf_thm12_dom_conv} and \eqref{Interesting_estimate} we see that 

$$\big|\wt{\ms{F}}_n(x)\big| \leq \int_0^\infty xe^{-\lambda x}\, \mu(\lambda)\,\dl +  \int_0^\infty (1-x)e^{-\lambda (1-x)}\, \mu(\lambda)\,\dl + \int_0^\infty r(\lambda)\, \mu(\lambda)\,\dl \leq C$$
for all $x \in [0,1]$ and $n \in \N$, since each of the first two integrals is a continuous function of $x \in (0,1)$, with finite side limits as $x \to 0$ and $x \to 1$, due to condition (H3) and the computation in \eqref{Intro_H3}. This implies that $\wt{\ms{L}}_n$ and $\wt{\ms{M}}_n$ are uniformly bounded on $\R$. The $2N+1$ Fourier coefficients of $\wt{\ms{L}}_n$ and $\wt{\ms{M}}_n$ are then uniformly bounded on $\R$ and we can extract a subsequence $\{n_k\}$ such that $\wt{\ms{L}}_{n_k} \to \wt{\ms{L}}_{\mu}$ and $\wt{\ms{M}}_{n_k} \to \wt{\ms{M}}_{\mu}$ uniformly in compact sets, where $\wt{\ms{L}}_{\mu}$ and $\wt{\ms{M}}_{\mu}$ are trigonometric polynomials of degree at most $N$. This completes the proof.

\section*{Acknowledgements} 
\noindent E. C. acknowledges support from CNPq - Brazil grants $302809/2011-2$ and $477218/2013-0$, and FAPERJ - Brazil grant $E-26/103.010/2012$.



\end{document}